\theoremstyle{plain}
\newtheorem{theorem}{Theorem}
\newtheorem{proposition}{Proposition}
\newtheorem{conjecture}{Conjecture}
\newtheorem{problem}{Problem}
\begin{document}

\title{Non-Generic Unramified Representations in Metaplectic 
Covering Groups }
\author{David Ginzburg}

\begin{abstract}
Let $G^{(r)}$ denote the metaplectic covering group of the linear
algebraic group $G$.
In this paper we study  conditions on unramified representations of the group $G^{(r)}$ not to have a nonzero Whittaker function. We state a general Conjecture about the possible unramified characters $\chi$ such that the unramified sub-representation of  $Ind_{B^{(r)}}^{G^{(r)}}\chi\delta_B^{1/2}$ will have no nonzero Whittaker function. We prove this Conjecture for the groups $GL_n^{(r)}$ with $r\ge n-1$,
and for the exceptional groups $G_2^{(r)}$ when $r\ne 2$. 
\end{abstract}

\thanks{ The author is partly supported by the Israel Science
Foundation grant number  259/14}

\address{ School of Mathematical Sciences\\
Sackler Faculty of Exact Sciences\\ Tel-Aviv University, Israel
69978 } \maketitle \baselineskip=18pt

\section{introduction}\label{intro}
Let $F$ denote a local field and let $G$ denote a split linear algebraic group defined over $F$. Let $B$ denote the Borel subgroup of $G$ and let $\chi$ denote an unramified character of $B$.
Let $f_\chi$ denote the unramified vector in the representation $Ind_B^G\chi\delta_B^{1/2}$.  Let
\begin{equation}\label{whit1}
W_\chi(g)=\int_U f_\chi(w_0ug)\psi_U(u)du
\end{equation}
denote the Whittaker function associated with $f_\chi$. Then this integral converges at some domain of $\chi$
and admits an analytic continuation to all $\chi$. 

Assume that for some unramified character $\chi_0$, the function 
$W_{\chi_0}(g)$ is zero for all $g$. This can happen if and only if 
$W_{\chi_0}(e)$ is zero.
It follows from the Casselman Shalika \cite{C-S} formula that there 
exists a parabolic subgroup $P$ of $G$, containing $B$, but not equal to it, such that $f_{\chi_0}$ is in the induced representation
$Ind_P^G\mu_0\delta_P^{1/2}$. Here $\mu_0$ is an unramified character of $P$. Conversely,  it is easy to prove that for such an induced  representation, integral  \eqref{whit1} is zero.

In this paper we consider a similar situation in the metaplectic covering groups. In contrast to the linear group case, representations of metaplectic covering groups tend not to have a unique Whittaker function. In details, let $\eta\in T_0^{(r)}\backslash T^{(r)}$ where $T^{(r)}$ is the inverse image in $G^{(r)}$ of the maximal torus $T$
of $G$, and $T_0^{(r)}$ a maximal abelian subgroup in $T^{(r)}$. See
Section \ref{basic}. Given an unramified character of $T$, we can form the induced representation $Ind_{B^{(r)}}^{G^{(r)}}\chi\delta_B^{1/2}$. Then we can form a Whittaker function defined on this induced space by all functions of the form
\begin{equation}\label{whit2}
W_{\eta}^{(r)}(g)=\int_{U} f^{(r)}(\eta w_0ug)\psi_{U}(u)du
\end{equation}
where $f^{(r)}$ denotes a vector in this induced space. Let $\pi$ denote the sub-representation of $Ind_{B^{(r)}}^{G^{(r)}}\chi\delta_B^{1/2}$ generated by the unramified vector which we shall denote by $f^{(r)}_\chi$. We will say that $\pi$ is generic if for some $\eta$ integral $W_{\eta}^{(r)}(g)$ is not zero when 
$f^{(r)}=f^{(r)}_\chi$.  Otherwise we say that $\pi$ is not generic. 
In this paper we study the following
\begin{problem}\label{prob1}
With the above notations, suppose that $\pi$ is not generic. What
can be said about the character $\chi$?
\end{problem}

A way to construct examples of non-generic representations is  using Theta representations.
Let $H^{(r)}(F)$ denote the $r$ metaplectic covering group of a linear algebraic group $H$.  
Let $\Theta_H^{(r)}$ denote a Theta representation defined on 
$H^{(r)}$. This representation can be realized as the unramified 
sub-representation of a certain induced representation $Ind_{B_H^{(r)}}^{H^{(r)}}\chi_\Theta^H\delta_{B_H}^{1/2}$. For example, when $H=GL_n$,
this representation was defined in \cite{K-P}. For this group $\chi_\Theta^H=\delta_{B(H)}^{-1/2r}$ where $B(H)$ is the Borel subgroup of $H$. The definition of these representations for other groups, can be found, for example, in 
\cite{F-G} and \cite{Gao}. Depending on the relation
between the rank of $H$ and the number $r$, the representation 
$\Theta_H^{(r)}$ may or may not be generic. For example, when 
$H=GL_n$, it follows from \cite{K-P} that $\Theta_H^{(r)}$ is not generic if $r<n$, and is generic for all $r\ge n$. As an another
example, it follows from \cite{Gao}, that when $H$ is the exceptional group $G_2$, then $\Theta_H^{(r)}$ is not generic for $r=2,3,4,5,6,9$. For this group the character $\chi_\Theta^H$ is described in  Subsection \ref{prg2}.

Let $\pi$ denote the unramified representation of $G^{(r)}$ as was defined above.  Suppose that $H$ is contained in a Levi part of a
certain parabolic subgroup of $G$. Then, there is a number $k$ such
that the group $GL_1^k\times H$ is a Levi part of a parabolic subgroup
$P$ of $G$. Let $\mu$ denote an arbitrary unramified  character of $GL_1^k$, and form the induced representation $Ind_{P^{(r)}}^{
G^{(r)}}\mu\Theta_H^{(r)}\delta_{P}^{1/2}$. Suppose that $\Theta_H^{(r)}$ is not generic. Then it easily follows that 
$Ind_{P^{(r)}}^{G^{(r)}}\mu\Theta_H^{(r)}\delta_{P}^{1/2}$ is not 
generic. Our goal in this paper is to study the following 
\begin{conjecture}\label{conj1}
Let $\pi$ be as above. Ignoring several special cases mentioned below, then $\pi$
is not generic if and only if it is a subrepresentation of 
$Ind_{P^{(r)}}^{G^{(r)}}\mu\Theta_H^{(r)}\delta_{P}^{1/2}$ for
some $\mu$ and $\Theta_H^{(r)}$, and such that $\Theta_H^{(r)}$ is
not generic. 
\end{conjecture}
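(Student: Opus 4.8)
One direction is essentially the remark preceding the Conjecture. Suppose $\pi\subseteq Ind_{P^{(r)}}^{G^{(r)}}\mu\,\Theta_H^{(r)}\delta_P^{1/2}$, where $M=GL_1^k\times H$ is the Levi of $P$ and $\Theta_H^{(r)}$ is not generic. Since $f_\chi^{(r)}$ is also the unramified vector of the larger induced space, it suffices to show the latter is not generic. Unfolding \eqref{whit2} along the unipotent radical of $P$ rewrites $W_\eta^{(r)}(f_\chi^{(r)};\cdot)$ in terms of the Whittaker integral of the unramified vector of $\mu\,\Theta_H^{(r)}$ on $M^{(r)}$; since $\mu$ lives on $GL_1^k$ (trivial unipotent radical) and this integral vanishes because $\Theta_H^{(r)}$ is not generic, we get $W_\eta^{(r)}(f_\chi^{(r)};\cdot)\equiv 0$ for every $\eta$. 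The content of the Conjecture is the converse.

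So assume $\pi$ is not generic, i.e.\ $W_\eta^{(r)}(f_\chi^{(r)};g)=0$ for all $\eta\in T_0^{(r)}\backslash T^{(r)}$ and all $g$; we must exhibit $P$, $\mu$ and a non-generic $\Theta_H^{(r)}$ with $\pi\subseteq Ind_{P^{(r)}}^{G^{(r)}}\mu\,\Theta_H^{(r)}\delta_P^{1/2}$. The plan is to make $f_\chi^{(r)}$ explicit enough to convert this vanishing into equations on the genuine unramified parameters of $\chi$. Fix a reduced decomposition $w_0=s_{i_1}\cdots s_{i_N}$, write $U$ as the corresponding ordered product of root subgroups, and evaluate \eqref{whit2} one variable at a time, using the metaplectic structure constants of $G^{(r)}$ and the invariance of $f_\chi^{(r)}$ under the inverse image of the hyperspecial maximal compact. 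This yields, for each $\eta$, a metaplectic Casselman--Shalika type expansion of $W_\eta^{(r)}(f_\chi^{(r)};t)$, $t\in T^{(r)}$, as a finite sum indexed by the Weyl group whose coefficients are products of rank-one metaplectic Gauss sums $\tau_\psi(\cdot)$ times rational functions of the $\chi(\alpha^\vee)$, with zeros along the hyperplanes where $\chi(\alpha^\vee)$ takes the value prescribed by $\chi_\Theta=\delta_B^{-1/2r}$ and poles along $\chi(\alpha^\vee)=1$; for $GL_n$ one may alternatively import the known formulas of Kazhdan--Patterson or Chinta--Offen. Recall that $\tau_\psi$ vanishes precisely when its argument is not prime to $r$. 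Letting $\eta$ range over all of $T_0^{(r)}\backslash T^{(r)}$ is essential: a single integral may vanish by accident, but simultaneous vanishing of the whole family forces the total coefficient of each distinct $\chi$-monomial to vanish, and tracing which Gauss sums are then forced to be zero one reads off a system of equations to the effect that $\chi(\alpha^\vee)$ equals the $\chi_\Theta$-value along a set $S$ of simple roots whose span is ``large enough'' relative to $r$.

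It remains to identify the solution set. A set $S$ of simple roots determines the semisimple part of the Levi $M$ of a standard parabolic $P$; writing $M=GL_1^k\times H$, the equations ``$\chi(\alpha^\vee)=\chi_\Theta(\alpha^\vee)$ for $\alpha\in S$'' say exactly that $\chi$ restricted to the maximal torus of $H$ equals $\chi_\Theta^H$, hence $\pi\subseteq Ind_{P^{(r)}}^{G^{(r)}}\mu\,\Theta_H^{(r)}\delta_P^{1/2}$ for the free twist $\mu$ on $GL_1^k$; and the ``large enough'' clause is precisely the condition, recorded in \cite{K-P} for $GL$-blocks and in \cite{Gao} for $G_2$, for $\Theta_H^{(r)}$ itself not to be generic. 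The restrictions on $r$ keep this combinatorics finite. For $GL_n^{(r)}$ with $r\ge n-1$, the only ``large enough'' $S$ is the full set $\{\alpha_1,\dots,\alpha_{n-1}\}$, which occurs only for $r=n-1$ --- then $H=GL_n$, $k=0$, and $\pi$ is a determinant twist of $\Theta_{GL_n}^{(r)}$ --- and is impossible for $r\ge n$, so every $\pi$ is generic, which is the Conjecture in that range. For $G_2^{(r)}$ with $r\ge 3$, the only candidate is $S=\{\alpha_1,\alpha_2\}$ (the two $GL_2$-Levis contribute generic thetas once $r\ge 2$), which is ``large enough'' exactly for $r\in\{3,4,5,6,9\}$; in that case $H=G_2$ and $\pi$ is a twist of $\Theta_{G_2}^{(r)}$, while for the remaining $r\ge 3$ every $\pi$ is generic. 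The case $r=1$ is classical and $r=2$ is excluded.

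The crux is the vanishing analysis in the second step: one must control not only the leading Weyl term but all the cross terms produced by the iterated integration, and rule out accidental simultaneous vanishing over all $\eta$ away from the theta loci. The delicacy is that a coefficient can vanish for two unrelated reasons --- arithmetically, through a Gauss sum $\tau_\psi$, or geometrically, through a $\chi$-dependent factor --- and separating the two is exactly the bookkeeping that closes only in the ranges $r\ge n-1$ and $r\ne 2$. For $G_2$ the value $r=2$ is genuinely exceptional: there one of the two rank-one subgroups enters the cover with a different metaplectic degree and the corresponding Weyl sum degenerates, which is why it must be set aside. A secondary technical point is to carry the vanishing, initially valid only on the cone of unramified $\chi$ where the Gauss-sum expansion converges, to all unramified $\chi$ by analytic continuation.
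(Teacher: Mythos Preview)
The statement you are attempting to prove is labeled a \emph{Conjecture} in the paper; it is not proved in general, and Theorem \ref{th1} establishes only the special cases $GL_n^{(r)}$ with $r\ge n-1$ and $G_2^{(r)}$ with $r\ne 2$. Your write-up is a strategy sketch rather than a proof, and the strategy you outline diverges from the paper's in a way that hides the actual difficulty. You propose to write down a ``metaplectic Casselman--Shalika type expansion'' of $W_\eta^{(r)}$ as a Weyl-group-indexed sum and then let $\eta$ range over all of $T_0^{(r)}\backslash T^{(r)}$ so that simultaneous vanishing forces the coefficient of every $\chi$-monomial to vanish. Two problems: first, no such clean Weyl-indexed formula is available in this generality --- for $GL_n$ the paper imports the Gelfand--Tsetlin pattern formula of \cite{B-B-C-F-G}, and for $G_2$ no analogue exists, so Section \ref{g2} computes the integrals by hand. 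Second, varying $\eta$ at $g=e$ is demonstrably insufficient: the paper records that for $G_2^{(2)}$ and $\eta=h(p,1)$ one has $W_\eta^{(2)}(e)=0$ for \emph{every} $\chi$, so that functional contributes nothing, and this is precisely why the case $r=2$ remains open. Your assertion that ``simultaneous vanishing of the whole family forces the total coefficient of each distinct $\chi$-monomial to vanish'' is therefore exactly the unproved step.

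What the paper actually does is nearly the opposite of varying $\eta$: it computes $W_\eta^{(r)}(e)$ for the \emph{single} choice $\eta=e$ (Proposition \ref{prop1} for $GL_n$, Proposition \ref{prop2} for $G_2$), obtaining one explicit polynomial equation in the $\chi_i$, and then generates further equations not by changing $\eta$ but by applying intertwining operators $I_w$ and using the functional equation $\lambda(I_w f_\chi^{(r)})=c_w(\chi)\lambda(f_{{}^w\chi}^{(r)})$ at well-chosen $w$. The proof of Theorem \ref{th1} in Section \ref{prth} is then an elementary but careful induction over Weyl elements, checking at each stage that the relevant $I_w$ is defined at $\chi_0$ and reading off one new relation among the $a_i=\chi_{0,i}(p)$. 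A few auxiliary values of $\eta$ (Propositions \ref{prop3} and \ref{prop4}) are computed only where the intertwining-operator argument alone does not close, notably for $G_2$ with small $r$. If you want to turn your sketch into a proof in the cases the paper handles, the missing ingredient is this intertwining-operator mechanism, not a hypothetical uniform expansion over all $\eta$.
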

The special cases we need to ignore are the cases when there is a certain subgroup of a 
Levi part of some parabolic subgroup of $G$ which splits under the $r$ fold cover. This happens for the groups $Sp_{2n}^{(2)},\ SO_{2n+1}^{(2)},\ F_4^{(2)}$ and $G_2^{(3)}$. In these case we can construct induced representations not of the form mentioned in Conjecture \ref{conj1}, such that the corresponding representations $\pi$ will
not be generic.

Our main result is 
\begin{theorem}\label{th1}
Conjecture \ref{conj1} holds for the groups $GL_n^{(r)}$ when 
$r=n-1$ and  for all $r\ge n$. It also holds for the exceptional group
$G_2^{(r)}$ for all $r\ne 2$. If $r\ne 2,3,4,5,6,9$,
then $\pi$ is always generic.
\end{theorem}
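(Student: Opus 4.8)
The strategy is to reduce the genericity question for the unramified subrepresentation $\pi$ of $Ind_{B^{(r)}}^{G^{(r)}}\chi\delta_B^{1/2}$ to an explicit combinatorial condition on $\chi$, by analyzing the metaplectic analogue of the Casselman--Shalika computation. First I would fix, for $G=GL_n$ and for $G=G_2$, an explicit description of the metaplectic torus $T^{(r)}$, the maximal abelian subgroup $T_0^{(r)}$, and a set of representatives $\eta$ for $T_0^{(r)}\backslash T^{(r)}$; these are governed by the Kazhdan--Patterson cocycle (for $GL_n$, following \cite{K-P}) and, for $G_2$, by the data recalled in \cite{Gao}. The key object is the family of Whittaker functionals: genericity of $\pi$ is equivalent to the non-vanishing, for some $\eta$, of the integral $W_\eta^{(r)}(e)$ applied to the unramified vector $f_\chi^{(r)}$. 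Using the crystal/Gelfand--Graev structure of the unramified principal series on covering groups, $W_\eta^{(r)}(e)$ is a finite sum over the Weyl group of terms indexed by $w\in W$, each a product of local factors $c_w(\chi)$ times an exponential in $\chi$ evaluated at the appropriate torus element; the $\eta$-dependence only permutes or rescales these terms. Hence $\pi$ is non-generic precisely when, for \emph{every} $\eta$, this whole sum vanishes, which — as in the linear case — forces a degeneracy: some of the $c_w(\chi)$ must have poles that are not cancelled, equivalently the unramified vector $f_\chi^{(r)}$ actually lies in an induced representation from a proper parabolic $P^{(r)}$, i.e. $\chi$ is fixed (up to the relevant twist $\delta$) by a reflection and the induced data on the Levi is itself a theta-type representation.

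The heart of the argument is then the classification step: I would show that for $GL_n^{(r)}$ with $r\ge n-1$ and for $G_2^{(r)}$ with $r\ne 2$, the only characters $\chi$ producing total cancellation are exactly those for which the Levi block carries the non-generic theta representation $\Theta_H^{(r)}$. For $GL_n$ this is where the hypothesis $r\ge n-1$ is used: the theta representation of $GL_m^{(r)}$ is non-generic iff $r<m$ \cite{K-P}, so inside $GL_n$ the only non-generic theta pieces that can occur in a Levi $GL_1^k\times GL_m$ are those with $m>r$, and the constraint $r\ge n-1$ limits $m\le n$, leaving only $m=n$ (the whole group, with $r=n-1$) or smaller blocks that are forced generic — making the bookkeeping of which $c_w(\chi)$ vanish tractable. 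Concretely I would: (i) compute the set of $\chi$ for which $\pi$ is a subrepresentation of $Ind_{P^{(r)}}^{G^{(r)}}\mu\Theta_H^{(r)}\delta_P^{1/2}$ — this is the ``easy'' direction, already noted in the excerpt, giving non-genericity; (ii) conversely, assume $\pi$ non-generic and run the Weyl-sum vanishing analysis, extracting a reflection $s_\alpha$ fixing $\chi$; (iii) iterate on the Levi of the corresponding maximal parabolic, using induction on rank, to peel off $GL_1$ factors until the remaining block is forced to be theta; (iv) check, using the explicit $\chi_\Theta^H$ (for $G_2$ as in Subsection \ref{prg2}), that the block data matches a non-generic theta exactly, and that when $r\ne 2,3,4,5,6,9$ no such block exists in $G_2$, so $\pi$ is always generic.

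For $G_2^{(r)}$ the argument is a finite case check rather than an induction: the proper parabolics have Levi $GL_2$ (long-root and short-root), so the only possible non-generic theta pieces are $\Theta_{GL_2}^{(r)}$ (non-generic iff $r=1$, excluded) and the ``full'' $G_2$ theta $\Theta_{G_2}^{(r)}$, which by \cite{Gao} is non-generic precisely for $r\in\{2,3,4,5,6,9\}$; excluding $r=2$ (a genuine special case because of the splitting phenomenon mentioned after Conjecture \ref{conj1}), the candidate characters $\chi$ are exactly $\chi_\Theta^{G_2}$ for $r\in\{3,4,5,6,9\}$, and for all other $r$ the Weyl-sum analysis shows no cancellation is possible, giving genericity.

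**Main obstacle.** The delicate point is step (ii)--(iii): on a covering group the Whittaker functional space is multi-dimensional and the ``metaplectic Casselman--Shalika'' sum is not a single Weyl-group sum but a sum twisted by the cocycle, so proving that total vanishing over \emph{all} $\eta$ forces an actual parabolic-induction degeneracy (rather than some accidental cancellation among Gauss-sum-type coefficients) requires carefully controlling those coefficients — showing the relevant $c_w(\chi)$ are non-zero away from the theta locus, and that the $\eta$-family of sums spans enough to detect any single surviving term. Getting this non-vanishing of coefficients, uniformly in $r$ in the stated ranges, is where the real work lies; the reduction to $GL_m$ or $GL_2$ blocks is comparatively formal once that is in hand.
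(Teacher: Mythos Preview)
Your high-level picture --- test genericity via $W_\eta^{(r)}(e)$ on the unramified vector, then use intertwining operators to propagate conditions on $\chi$ --- matches the paper, but the mechanism you describe for step (ii)--(iii) does not, and what you identify as the ``main obstacle'' is precisely the place where your argument is missing a proof. You assert that $W_\eta^{(r)}(e)$ is a Weyl-group sum of terms $c_w(\chi)\cdot(\text{exponential})$ in which the $\eta$-dependence ``only permutes or rescales'' terms, and then that simultaneous vanishing forces some $c_w(\chi)$ to have an unresolved pole, hence a reflection fixing $\chi$, hence parabolic induction and then an induction on rank. In the metaplectic setting none of this is available off the shelf: the known formulas (e.g.\ the Gelfand--Tsetlin/crystal expressions in \cite{B-B-C-F-G,M}) are sums over combinatorial patterns with Gauss-sum coefficients, not clean Weyl-group sums of products of linear-case $c_w$'s, and varying $\eta$ genuinely changes the polynomial, not merely rescales it. The linear-case mechanism ``vanishing of the Casselman--Shalika product $\Rightarrow$ $f_{\chi_0}$ lies in a proper $Ind_P^G$'' simply does not carry over, and you give no replacement for it. Your own proposal ends by conceding this.

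What the paper actually does is far more computational and does not pass through any parabolic-induction reduction or rank induction. For $GL_n$ it computes $W^{(r)}_e(e)$ explicitly via \cite{B-B-C-F-G}: for $r\ge n$ only the trivial Gelfand--Tsetlin pattern survives, giving $W^{(r)}_e(e)=1$ (hence generic for every $\chi$), while for $r=n-1$ exactly one extra pattern appears and one gets the single polynomial $1-q^{-(n-1)}(\chi_1\chi_n^{-1})^{n-1}$. From this one equation alone one only learns $a_1=q\,a_n$; the rest of the characters are then pinned down not by peeling off $GL_1$ factors on a Levi, but by applying chains of intertwining operators $I_{w_1}\circ\cdots\circ I_{w_{i-1}}$ and $I_{w_{n-1}}\circ\cdots\circ I_{w_i}$ inside $GL_n$ itself, each time producing a new factored polynomial equation in the $a_i$, and solving these equations by a two-pass explicit argument yields $a_i=q^{(n-i)/(n-1)}a_n$, i.e.\ exactly $\chi_0=\mu\,\delta_B^{-1/2(n-1)}$. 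For $G_2$ there is no formula like \cite{B-B-C-F-G}, and the paper computes $W^{(r)}_e(e)$ by hand via an elaborate Iwasawa-decomposition analysis (several pages, many subintegrals), obtaining explicit polynomials in $\chi_1,\chi_2$ for each $r\in\{2,3,4,5,6,9\}$ and the constant $1$ otherwise; for $r=4,6,9$ this single polynomial plus one additional computation with a nontrivial $\eta$ already suffices, while for $r=5$ (and partly $r=3$) one further invokes $I_{w_a}$ and $I_{w_b}$ to get enough equations. So the proof is explicit equation-extraction and solving, case by case, not a structural Casselman--Shalika-style vanishing argument; your plan would need to supply that missing computation rather than a Levi induction.
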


Our method in proving the Theorem is essentially by direct calculation. The problem is to determine the minimal computations needed in order to derive our result. In most cases it is enough to
compute only the integral $W_{\eta}^{(r)}(e)$ where $\eta$ is the identity. Then, to obtain our result, we use intertwining operators as explained in Section \ref{prth}. When $G=GL_n$ we use the formula
given in  \cite{B-B-C-F-G} to compute $W_{\eta}^{(r)}(e)$ with $\eta=e$.  We review their formula in Section \ref{gl}. 

For the group $G_2$, a similar formula as in \cite{B-B-C-F-G}, does
not exist. Therefore we compute the integrals $W_{\eta}^{(r)}(e)$
explicitly. This is done in Section \ref{g2}. If $r\ne 2,3$, to prove the Theorem, it
is enough to compute $W_{\eta}^{(r)}(e)$ with $\eta=e$. This we do in
Proposition \ref{prop2}. In Proposition \ref{prop3} we compute 
$W_{\eta}^{(r)}(e)$ with $\eta=h(p^\alpha,p^{-\alpha})$. We do it for 
all $r$, since the computation in this case follow quite easily from the computation in Proposition \ref{prop2}. Finally, in Proposition 
\ref{prop4}, for $r=2,3$ we compute $W_{\eta}^{(r)}(e)$ for two
special choices of $\eta$. Using that, we are able to prove the
Theorem when $r=3$. Notice that $r=3$ is one of the special cases  mentioned above, so the best we can assert is that $\chi=(\chi_1,\chi_1 |\cdot |^{-1})$. See Subsection \ref{prg2}. We mention that
for the group $G_2$, excluding $r=3$, the group $H$ appearing in Conjecture \ref{conj1} is the whole group $G_2$. Indeed, the two
maximal parabolic subgroups of $G_2$ have Levi part which is the group $GL_2$. As follows from Theorem \ref{th1} for $r\ge 2$, every unramified representation of $GL_2^{(r)}$ is generic.

A similar situation occurs for the group $GL_n^{(r)}$ when $r=n-1$. In this case, the group $H$ is the group $GL_n$ itself. 

Unfortunately, these
computations are not enough to prove the Theorem for the group $G_2^{(2)}$.
The result we obtain is that $\chi$ can be one of the following three
characters. Either $\chi=(|\cdot |^{-1},|\cdot |^{-1/2})$ which, as mentioned in Subsection \ref{prg2} are the parameters of the Theta representation. Or, we get two other possibilities which are
$\chi=(1,|\cdot |^{-1/2})$ and $\chi=(|\cdot |^{-1/2},1)$. For this group, since the set
$T_0^{(2)}\backslash T^{(2)}$ contains four elements, then for $\chi$ in general position, the space of Whittaker functions of $\pi$ contains 
four linearly independent functions. It follows from our computations that for
three such functions the value $W_{\eta}^{(2)}(e)\ne 0$. Assuming that 
these three numbers are zero implies that $\chi$ is one of the three
cases we mentioned above. See Propositions \ref{prop2}, \ref{prop3}
and \ref{prop4}. 
However, when $\eta=h(p,1)$, we obtain $W_{\eta}^{(2)}(e)=0$ for all $\chi$.  Hence, to get 
information about it, we need to compute $W_{\eta}^{(2)}(g)$ for other values of $g$. 
At this point we do not know if further computations will eliminate the last two cases of $\chi$ mentioned above, or that maybe
Conjecture \ref{conj1} does not hold for $G_2^{(2)}$.

\section{Basic Definitions and Notations}\label{basic}
We first review the definition of the induced representation $Ind_{B^{(r)}}^{G^{(r)}}\chi\delta_B^{1/2}$. The reference for all this is \cite{K-P}. Let
$B$ denote the standard Borel subgroup of $G$. Let $B=TU$ where $T$ is  the maximal split torus of $G$ and $U$ is the unipotent radical of $B$.  Let 
$T^r$ denote the  subgroup of $T$ consisting of all elements whose entries are $r$ powers. Then 
$T^r$ splits inside $T^{(r)}$, the inverse image of $T$ inside $G^{(r)}$. Let $\chi$ denote an unramified character of $T$. We consider it as a character of $T^r$ by restriction. Let $K$ denote the maximal compact subgroup of $G$. It is well known that it splits under the cover, and hence can be viewed as a subgroup of $G^{(r)}$. Let $T_0^{(r)}$ denote the subgroup of $T^{(r)}$ which is generated by 
$T^r$, and by $T^{(r)}\cap K$. Then, $T_0^{(r)}$ is a maximal abelian subgroup of $T^{(r)}$. 
Thus, we can extend $\chi$ trivially from $T^r$ to $T_0^{(r)}$. Inducing up we obtain the representation $Ind_{B^{(r)}}^{G^{(r)}}\chi\delta_B^{1/2}$ which is independent of the choice of the maximal abelian subgroup of $T^{(r)}$.

Let $W_G$, or simply $W$, denote the Weyl group of $G$. We denote by $w_0$ the longest element in $W$.

Our goal is to study the set of functions given by integral \eqref{whit2}, where $\eta\in T_0^{(r)}\backslash T^{(r)}$, and $f^{(r)}$ is the unramified vector in $Ind_{B^{(r)}}^{G^{(r)}}\chi\delta_B^{1/2}$. The character $\psi_U$ is the Whittaker character of $U$. More precisely, if $\alpha_i$ for $1\le i\le l$ are the set of all positive simple roots of $G$, we can write
an element $u\in U$ as $u=\alpha_1(r_1)\ldots\alpha_l(r_l)u'$ where
$u'$ is in the commutator subgroup of $U$. Let $\psi$ denote an unramified character of $F$. Then we define 
$\psi_{U}(u)=\psi(r_1+r_2+\cdots +r_l)$. We assume that  integral
\eqref{whit2} converges in some domain of $\chi$ and admits an
analytic continuation to all $\chi$. 
We shall also denote  $\lambda_{\eta}(f^{(r)})=W_{\eta}^{(r)}(e)$. 

Given $w\in W$, We shall denote by $I_w$, the intertwining operator attached to $w$. Thus, we have
\begin{equation}\label{inter1}
I_wf^{(r)}(g)=\int\limits_{U_w} f^{(r)}(w^{-1}ug)du
\end{equation}
Here $U_w=w^{-1}Uw\cap U^-$ where $U^-=w_0Uw_0^{-1}$. This integral converges in some domain of $\chi$ and admits a meromorphic continuation.

Assume that $G=GL_n$. Then the group $G^{(r)}$ is as defined in \cite{K-P} with $c=0$. If $G=G_2$ we consider it as a subgroup of $SO_7$ and the cocycle we use is as defined for $SO_7$ in \cite{B-F-G}. In more details, if $r$ is odd we use the Hilbert symbol $(\cdot,\cdot)_r$ to form the covering group $SO_7^{(r)}$ and view it as 
subgroup of $GL_7^{(r)}$. Then, for computations in $G_2$ we restrict
the cocycle defined on $GL_7^{(r)}$ to $G_2^{(r)}$. When $r$ is even,
we do the same but with the Hilbert symbol  $(\cdot,\cdot)_{2r}$. In
both cases we get the $r$ fold cover of $G_2$. To make notations uniform, let $n=r$ if $r$ is odd, and $n=2r$ if $r$ is even. Then to form the group $G_2^{(r)}$, we use the cocycle $(\cdot,\cdot)_n$.

For both groups we shall identify elements of $G^{(r)}$ as pairs
$<g,\epsilon>$ where $g\in G$ and $\epsilon\in \mu_m$. Here $\mu_m$  
is the group of $m$ roots of unity. We shall assume that $m$ is large
enough so that $(p,p)_n=1$. Here $p$ is a generator of the maximal ideal inside the ring of integers of $F$. We shall denote $q=|p|^{-1}$.

\section{The Group $GL_n$}\label{gl}
Let $B$ be the Borel subgroup of $GL_n$ consisting of upper unipoetnt matrices. Let $T$ denote the group of all diagonal matrices in $B$.
We denote by $U$ the unipotent radical of $B$. We realize the Weyl
group $W$ of $GL_n$ as all permutation matrices in $GL_n$. 
For $1\le i\le n-1$, let $w_i$ denote the simple refection corresponding to the simple root $\alpha_i$. In matrices, we have
$w_i(i,i+1)=w_i(i+1,i)=1$ and $w_i(j,j)=1$ for all $j\ne i,i+1$. Here
$w_i(l_1,l_2)$ denotes the $(l_1,l_2)$ entry of $w_i$.

For $1\le i\le n$, let $\chi_i$ denote an  unramified character of $F^*$. Let $\chi$ denote the unramified character of $T$ defined by
$\chi(\text{diag}(a_1,\ldots, a_n))=\chi_1(a_1)\ldots \chi_n(a_n)$.
As explained in Subsection \ref{basic}, we form the induced representation $Ind_{B^{(r)}}^{G^{(r)}}\chi\delta_B^{1/2}$. 

Let $W_n^{(r)}(e)$ denote the integral \eqref{whit2} where we take
$\eta=1$ and $g=e$. We have
\begin{proposition}\label{prop1}
The integral $W_n^{(r)}(e)$ converges for all $\chi$. When $r=n-1$,
we have $W_n^{(n-1)}(e)=1-q^{-(n-1)}\chi_1^{n-1}\chi_n^{-(n-1)}(p)$.
For all $r\ge n$, we have $W_n^{(r)}(e)=1$.
\end{proposition}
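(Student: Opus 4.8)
The plan is to compute $W_n^{(r)}(e)$ directly from the Casselman--Shalika--type formula for metaplectic $GL_n$ recorded in \cite{B-B-C-F-G}, which expresses $W_{\eta}^{(r)}(e)$ with $\eta=e$ as a sum over a suitable subset of the Weyl group (or equivalently a crystal/Gelfand--Tsetlin sum) of products of local factors built from the unramified character $\chi$ and Gauss sums. First I would recall, as we do in Section \ref{gl}, the precise shape of that formula: the only contributions that survive with $\eta=e$ come from a restricted set of Weyl elements, and each contributing term carries a monomial in the $\chi_i(p)$ twisted by a power of $q^{-1}$, possibly multiplied by a Gauss sum. The key observation driving the case split is that whether a given Weyl element contributes depends on divisibility of certain exponents by $r$; raising $r$ makes fewer exponents divisible, so fewer terms survive.

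Next I would carry out the bookkeeping in two regimes. For $r\ge n$: I would argue that the only surviving term is the one coming from the identity (or the trivial contribution), because every nontrivial Weyl element forces at least one exponent lying strictly between $1$ and $n-1$ to be ``hit,'' and such an exponent can never be a nonzero multiple of $r$ once $r\ge n$. Hence all Gauss-sum-decorated terms vanish and $W_n^{(r)}(e)=1$. For $r=n-1$: exactly one additional Weyl element survives, namely the longest element $w_0$ (the only one whose associated exponent equals $n-1=r$), and its contribution is a single clean monomial without a genuinely oscillatory Gauss sum — the Gauss sum degenerates to $-q^{-1}$ times the appropriate power — producing the term $-q^{-(n-1)}\chi_1^{n-1}\chi_n^{-(n-1)}(p)$. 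Adding the identity contribution $1$ gives the stated formula. Convergence for all $\chi$ is immediate once we know the sum is finite (indeed a single term beyond the constant), so that part needs only a remark that the $\eta=e$ formula terminates.

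The main obstacle I anticipate is pinning down the exact normalization in the \cite{B-B-C-F-G} formula so that the power of $q$ and the precise exponent $n-1$ on $\chi_1/\chi_n$ come out correctly, and verifying that the relevant Gauss sum for the $w_0$-term really does collapse to $-q^{-1}$ rather than a nontrivial twisted sum. Concretely, I would need to check that for $r=n-1$ the character twisting the Gauss sum attached to $w_0$ is trivial on units (so the Gauss sum is the ``degenerate'' one $g(\text{triv})=-1/q$ in the normalization used there), whereas for all intermediate Weyl elements the twisting character is ramified and the Gauss sum vanishes. Once that is settled, the rest is routine substitution into the formula reviewed in Section \ref{gl}. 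I would also double-check the boundary behavior at $r=n$ itself to confirm the $w_0$-term has genuinely dropped out there, matching the clean answer $W_n^{(n)}(e)=1$ and the transition stated between the $r=n-1$ and $r\ge n$ cases.
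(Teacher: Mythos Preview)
Your plan matches the paper's---feed the \cite{B-B-C-F-G} formula and show that at most two terms survive---but the mechanism you describe for killing the intermediate terms is wrong, and so is your picture of the surviving nontrivial term. The intermediate terms are \emph{not} killed by vanishing Gauss sums. The paper writes $W_n^{(r)}(e)$ as a sum over strict Gelfand--Tsetlin patterns with top row $(n-1,n-2,\ldots,0)$, and because the maximal abelian subgroup here is $T_0^{(r)}$ rather than the one used in \cite{B-B-C-F-G}, the unramified vector satisfies $f^{(r)}(t)=0$ unless $t\in T^r$; this forces every weight $k_i$ to satisfy $k_i\equiv 0\pmod r$. Since for this top row one always has $0\le k_i\le n-1$, the constraint leaves only the trivial pattern when $r\ge n$, and when $r=n-1$ it forces each $k_i\in\{0,n-1\}$, which pins down exactly one extra pattern. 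This support condition, not any Gauss-sum cancellation, is also what makes the sum finite for every $\chi$ and gives the convergence claim.

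The nontrivial pattern at $r=n-1$ does \emph{not} contribute via a single degenerate Gauss sum. In the paper's notation that pattern has $l_3=1$ (yielding one factor $-q^{-1}$) but $l_2=2(n-2)$, so it carries $2(n-2)$ genuine normalized Gauss sums $G_1^{(n-1)},\ldots,G_{n-2}^{(n-1)}$, each occurring twice; their product equals $1$. Combining $q^{-(n-2)}\cdot 1\cdot(-q^{-1})$ with the monomial $(\chi_1\chi_n^{-1})^{n-1}$ gives the stated term $-q^{-(n-1)}\chi_1^{n-1}\chi_n^{-(n-1)}$. So the check you flagged as the main obstacle is not ``is the twisting character trivial on units'' but rather a short product identity among nontrivial Gauss sums.
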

\begin{proof}
To prove this Proposition we use Theorem 1 in \cite{B-B-C-F-G}. As is mentioned in \cite{M}  the result in that Theorem can be stated as follows. Consider the integral \eqref{whit2} with $\eta=1$ and $g=e$. Then perform the Iwasawa decomposition of $w_0u$. Since $f^{(r)}$
is the unramified vector, then after performing the Iwasawa 
decomposition, we are left with $f^{(r)}(t)$ which we need to evaluate. Here $t$ is a certain torus element which is obtained from the above decomposition, and whose entries are elements which are powers of $p$.  The evaluation of  $f^{(r)}(t)$ then
depends on the maximal abelian subgroup which we choose to work with. 
In \cite{B-B-C-F-G}, the maximal abelian subgroup chosen is the one
which is generated by $T^r$ and by all tori elements whose
entries are powers of $p$. Our choice is the abelian subgroup 
$T_0^{(r)}$ described in Section \ref{basic}. Nevertheless, since
the Iwasawa decomposition is an independent process which has no
relation to the maximal abelain subgroup we choose, we can then
still use the formula given in \cite{B-B-C-F-G}, but add an extra
condition which we will mention bellow. 

Using that we have from \cite{B-B-C-F-G} the following,
\begin{equation}\label{ice1}\notag
W_n^{(r)}(e)=\sum_{\Delta_r}(1-q^{-1})^{l_1(\Delta_r)}q^{-l_2(\Delta_r)/2}(-q^{-1})^{l_3(\Delta_r)}G_{\Delta_r}^{(r)}
(\chi_1\chi_2^{-1})^{k_1}(\chi_2\chi_3^{-1})^{k_2}\ldots
(\chi_{n-1}\chi_n^{-1})^{k_{n-1}}
\end{equation}
The sum is over all strict Gelfand-Tsetlin patterns $\Delta_r$ whose top row
is the row given by $\begin{pmatrix} n-1&n-2&\ldots&2&1&0\end{pmatrix}$, and such that for all $1\le i\le n-1$, the numbers $k_i$ as defined in \cite{B-B-C-F-G}
equation (13), are all zero modulo $r$. For notations related to 
Gelfand-Tsetlin patterns we refer to \cite{B-B-C-F-G}. This last 
condition on the numbers $k_i$ is the result of our choice of the
maximal ableian subgroup. Indeed, it easily follows that  
$f^{(r)}(t)$ is zero unless $t\in T^r$. Hence, we may discard all
those Gelfand-Tsetlin patterns which do not satisfy the above 
condition on the numbers $k_i$. 

The number $l_1(\Delta_r)$ is the number of numbers $a_{i,j}$ in
$\Delta_r$ such that $a_{i-1,j}\ne a_{i,j}\ne a_{i-1,j-1}$ and that
the numbers $b_{i,j}$ defined in \cite{B-B-C-F-G} equation (11), are
zero modulo $r$. In our computations we will always have $l_1(\Delta_r)=0$. The number $l_2(\Delta_r)$ is the number of numbers $a_{i,j}$ in $\Delta_r$ such that $a_{i,j}=a_{i-1,j-1}$ and that 
the numbers $b_{i,j}$ are not zero modulo $r$. For each such numbers
$a_{i,j}$ and $b_{i,j}$ we form the Gauss sum $G_{b_{i,j}}^{(r)}(1,p)$. Then, we denote by $G_{\Delta_r}^{(r)}$ the product of all those
Gauss sums where the product is over all such numbers $a_{i,j}$. 
Finally, we  denote by $l_3(\Delta_r)$ the number of all numbers 
$a_{i,j}$ which satisfies $a_{i,j}=a_{i-1,j-1}$ and that 
the numbers $b_{i,j}$ are zero modulo $r$.

With these notations, the proof of the Proposition follows easily. Indeed, we claim that when $r\ge n$, there is only one relevant 
Gelfand-Tsetlin pattern, and when $r=n-1$ there are only two. We will
prove it and for clarity, we first prove it for the group $GL_4$. To
prove that $W_4^{(r)}(e)=1$ for all $r\ge 4$, we consider all $\Delta_r$ such that
the numbers $k_i$ are all zero modulo $r$. Since the first row of
$\Delta_r$ is $\begin{pmatrix} 3&2&1&0\end{pmatrix}$ we deduce that
the only possible second row which is relevant is the row 
$\begin{pmatrix} 2&1&0\end{pmatrix}$. In this case we have $k_1=0$,
and this is the only possibility. The condition that $k_2$ will be
zero modulo $r$, implies that the third row of $\Delta_r$ is
$\begin{pmatrix} 1&0\end{pmatrix}$ and similarly, for $k_3$ to be
zero modulo $r$, the fourth row of $\Delta_r$ must be $0$. Thus, we
obtain one Gelfand-Tsetlin pattern which satisfies the required
conditions. Since $a_{i,j}=a_{i-1,j}$ for all $(i,j)$ we deduce that
$l_m(\Delta_r)=0$ for all $m=1,2,3$. We refer to this pattern as the trivial pattern.  We thus obtain that  $W_4^{(r)}(e)=1$. For
general values of $r\ge n$, we obtain the same result, namely that only the trivial pattern contributes. Hence, $W_n^{(r)}(e)=1$ for all
$n$.

Next consider the case when $r=n-1$. Clearly we will have the
contribution of the trivial
pattern which contributes one to the value of $W_n^{(n-1)}(e)$. 
There is one more relevant pattern. 
Indeed, when $n=4$ the only other option
for the second row is $\begin{pmatrix} 3&2&1\end{pmatrix}$ which 
implies that $k_1=3$. With that we deduce that only $\begin{pmatrix}
3&1\end{pmatrix}$ is relevant and it gives $k_2=3$. Then the last
row in $\Delta_3$ is the number three which implies $k_3=3$. For this
pattern we have $a_{1,1}=a_{2,2}=a_{3,3}=3$, $a_{1,2}=2$ and 
$a_{1,3}=1$. These are the only numbers $a_{i,j}$ which can contribute
to the numbers $l_m(\Delta_r)$. We have $b_{1,1}=3;\ b_{1,2}=b_{2,2}
=1$ and $b_{1,3}=b_{3,3}=2$. Hence, we have $l_1(\Delta_3)=0$. Only
$a_{1,1}$ will contribute to $l_3(\Delta_3)$ and all other four
numbers contribute to $l_2(\Delta_3)$. Hence $l_2(\Delta_3)=4$ and
$l_3(\Delta_3)=1$. Finally, since for the four numbers contributing
to $l_2(\Delta_3)$ two of the values of the corresponding $b_{i,j}$ are ones and two are twos, we deduce that  $G_{\Delta_r}^{(3)}=
(G_1^{(3)}(1,p))^2(G_2^{(3)}(1,p))^2=1$. Hence, the contribution to 
$W_4^{(3)}(e)$ from this pattern is given by 
$$(1-q^{-1})^0(q^{-4/2})(-q^{-1})^1(\chi_1\chi_2^{-1})^3
(\chi_2\chi_3^{-1})^3(\chi_3\chi_4^{-1})^3=-q^{-3}\chi_1^3
\chi_4^{-3}$$
Combining with the contribution of one from the trivial pattern, the
Proposition follows for the case $n=4$. For any $n$,  beside the trivial pattern we also have one more relevant pattern. It is the one
such that $a_{i,i}=n-1$ for all $1\le i\le n-1$, and 
$a_{i,i+j}=n-j-i$ for all $1\le i\le n-j-1$. For this pattern we
have $k_l=n-1$ for all $l$. As in the case when $n=4$, we have no
$a_{i,j}$ which will contribute to $l_1(\Delta_{n-1})$ which is
then equal zero. The only relevant numbers to compute the rest of
the factors are, $a_{1,1}=n-1;\ b_{1,1}=n-1$,\ $a_{i,i}= n-1;\ b_{i,i}
=i-1$ for $2\le i\le n-1$, and $a_{1,i}=n-i;\ b_{1,i}=n-i$ for 
$2\le i\le n-1$. In the first case, from $a_{1,1}$ we get a contribution to $l_3(\Delta_{n-1})$. From the second case, we get the contribution $q^{-1/2}G_{i-1}^{(n-1)}(1,p)$ for all $2\le i\le n-1$.
From the third case we get $q^{-1/2}G_{n-i}^{(n-1)}(1,p)$ for all $2\le i\le n-1$. Hence, $l_3(\Delta_{n-1})=1$ and $l_2(\Delta_{n-1})=
2(n-2)$. 
Thus, this pattern contributes  
$$q^{-2(n-2)/2}(\prod_{i=2}^{n-1}G_{i-1}^{(n-1)}(1,p))^2(-q^{-1})(\chi_1\chi_2^{-1})^{n-1}
\dots (\chi_{n-1}\chi_n^{-1})^{n-1}=-q^{-(n-1)}(\chi_1\chi_n^{-1})^{n-1}$$
From this the Proposition follows.

\end{proof}

\section{The group $G_2$}\label{g2}
For computations it will be convenient to use the matrix realization 
of $G_2$ as given in \cite{H-R-T}. Thus, $T$ is generated by all
$h(k_1,k_2)=\text{diag}(k_1,k_2,k_1k_2^{-1},1,k_1^{-1}k_2,k_2^{-1},k_1^{-1})$. Recall that $n=r$ if $r$ is odd, and $n=2r$ if $r$ is even.
With these notations we have the following identity
$<h(k_1,k_2),1><h(r_1,r_2),1>=(k_1,r_1^{-2}r_2)_n(k_2,r_1r_2^{-2})_n<h(k_1r_1,k_2r_2),1>$. This implies that 
\begin{equation}\label{cocycle1}
<h(\epsilon_1^{-1},\epsilon_2^{-1}),1><h(p^\alpha,p^\beta),1>
<h(\epsilon_1,\epsilon_2),1>=(\epsilon_1,p^{4\alpha-2\beta})_n
(\epsilon_2,p^{-2\alpha+4\beta})_n<h(p^\alpha,p^\beta),1>
\end{equation}

Henceforth, when there is no confusion, we shall write $g$ for 
$<g,1>$. Let $\chi$ denote an unramified character of $T$. With the 
above parametrization, we write $\chi(h(k_1,k_2))=\chi_1(k_1)\chi_2(k_2)$. Here $\chi_i$ are two unramified characters of $F^*$. 
As explained in Section \ref{basic}, using $\chi$ we can construct
the induced representation $Ind_{B^{(r)}}^{G^{(r)}}\chi\delta_B^{1/2}$.

Let $a$ and $b$ denote the two simple roots of $G_2$, where $a$ is the
short root. For a root $\alpha$, we shall denote by $x_\alpha(l)$ the one dimensional unipotent subgroup associated with this root. Matrix realization of $x_a(l)$ and $x_b(l)$ are given in \cite{H-R-T}. The
other positive roots of $G_2$ are $a+b,\ 2a+b,\ 3a+b$ and $3a+2b$. We
shall denote by $w_a$ and $w_b$ the two simple reflection in $W$.
In the proof of the following Proposition we will repeatedly use
commutation relations between elements of the group $G_2$. Using
the matrix realization of $x_a(l),x_b(l)$ and the simple reflections
$w_a$ and $w_b$, given in \cite{H-R-T},  it is easy to deduce these commutation relations. 
Hence,we will not specify them, and leave it to reader to verify 
them.

We first compute integral \eqref{whit2} with $\eta=1$ and $g=1$. We shall denote it by $W^{(r)}(e)$. Also, when there is no confusion, we shall write $\chi_i$ for $\chi_i(p)$. We have
\begin{proposition}\label{prop2}
Integral $W^{(r)}(e)$ converges in the domain $|\chi_1\chi_2|<q^{1/2}$. We have,
\begin{equation}\label{two}
W^{(2)}(e)=1+(1-q^{-1})q^{-1}\chi_1^2(1+\chi_2^2)-q^{-2}\chi_1^2(\chi_1^2
+\chi_2^4)-(1-q^{-1})q^{-2}\chi_1^4\chi_2^2(1+
\chi_2^2)+q^{-4}\chi_1^6\chi_2^4
\end{equation}
\begin{equation}\label{three}
W^{(3)}(e)=(1-q^{-1}\chi_1\chi_2^{-1})(1-q^{-1}\chi_1^2\chi_2)
(1-q^{-1}\chi_1\chi_2^2)(1+q^{-1}\chi_1^2\chi_2+q^{-1}\chi_1\chi_2^2)
\end{equation}
\begin{equation}\label{four}
W^{(4)}(e)=1-q^{-2}\chi_1^4
\end{equation}
\begin{equation}\label{five}
W^{(5)}(e)=1-q^{-3}\chi_1^5\chi_2^5
\end{equation}
\begin{equation}\label{six}
W^{(6)}(e)=1-q^{-2}\chi_1^2\chi_2^4
\end{equation}
\begin{equation}\label{nine}
W^{(9)}(e)=1-q^{-3}\chi_1^6\chi_2^3
\end{equation}
For all other values of $r$ we have $W^{(r)}(e)=1$.
\end{proposition}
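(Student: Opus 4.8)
The plan is to compute $W^{(r)}(e)=\int_U f^{(r)}_\chi(w_0u)\,\psi_U(u)\,du$ by an explicit iterated integration, using crucially that $G_2$ is small: $\dim U=6$ and $\ell(w_0)=6$. First I fix the reduced word $w_0=w_aw_bw_aw_bw_aw_b$ and the associated ordering of the positive roots, namely $a,\ 3a+b,\ 2a+b,\ 3a+2b,\ a+b,\ b$; writing a general $u\in U$ as $u=x_a(t_1)x_{3a+b}(t_2)x_{2a+b}(t_3)x_{3a+2b}(t_4)x_{a+b}(t_5)x_b(t_6)$ gives $du=dt_1\cdots dt_6$, and since only the two simple-root coordinates enter the generic character, $\psi_U(u)=\psi(t_1+t_6)$. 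Substituting this into \eqref{whit2} turns $W^{(r)}(e)$ into a six-fold $F$-integral.

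Next I peel off the simple reflections one at a time from the left. At each stage one is reduced to an integral of the form $\int_F f^{(r)}_\chi(w_\alpha\,x_\alpha(t)\,h\,k)\,\psi^{?}(t)\,dt$, where $h$ is the torus element accumulated so far, $k\in K$, and $\psi^{?}$ is trivial except at the very first and the very last step (the simple roots $a$ and $b$). Performing the rank-one Iwasawa decomposition of $w_\alpha x_\alpha(t)$ inside the $SL_2$ attached to $\alpha$ and using the $K$-invariance of $f^{(r)}_\chi$, the variable $t$ is integrated out: when $\psi^{?}$ is trivial this yields a truncated geometric series in $q^{-1}$ and the characters, and when $\psi^{?}$ is nontrivial it yields a Gauss sum $G^{(n)}(\,\cdot\,,p)$ together with torus shifts. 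Between steps one uses the commutation relations in $G_2$ (read off from the matrix realization of \cite{H-R-T}) to move $w_\alpha$ through the remaining root subgroups. The cocycle \eqref{cocycle1} is carried along throughout: conjugating an accumulated element $h(p^\alpha,p^\beta)$ by a unit-torus element multiplies it by $(\epsilon_1,p^{4\alpha-2\beta})_n(\epsilon_2,p^{-2\alpha+4\beta})_n$, so that $f^{(r)}_\chi(h(p^\alpha,p^\beta))$ vanishes unless $n\mid 4\alpha-2\beta$ and $n\mid 4\beta-2\alpha$, and when it is nonzero it equals $\chi_1(p)^\alpha\chi_2(p)^\beta$ times the power of $q^{-1/2}$ supplied by $\delta_B^{1/2}$.

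Assembling the pieces, $W^{(r)}(e)$ becomes a finite sum over those torus elements $h(p^\alpha,p^\beta)$ --- the $G_2$ analogue of the Gelfand--Tsetlin ``patterns'' of \cite{B-B-C-F-G} --- that are both reachable by the bounded region of integration and compatible with the divisibility conditions above. The trivial pattern $h(1,1)$ always contributes $1$. The two conditions $n\mid 4\alpha-2\beta$, $n\mid 4\beta-2\alpha$ (recall $n=r$ for $r$ odd and $n=2r$ for $r$ even), together with the finite list of reachable directions $(\alpha,\beta)$, leave only the trivial pattern for every $r\notin\{2,3,4,5,6,9\}$, giving $W^{(r)}(e)=1$ there; for each exceptional value one lists the remaining patterns, evaluates the products of Gauss sums attached to them (for instance $(G_1^{(3)}(1,p)G_2^{(3)}(1,p))^2=1$ and the analogous $G^{(n)}$-identities, which turn the half-integer powers of $q$ into the integer powers that appear), and collects the geometric factors to obtain \eqref{two}--\eqref{nine}. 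Convergence of $W^{(r)}(e)$ in the range $|\chi_1\chi_2|<q^{1/2}$ follows from the geometric-series estimates in the iteration.

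The main obstacle is the bookkeeping. One must propagate the cocycle \eqref{cocycle1} correctly through the six successive Iwasawa decompositions --- the Hilbert-symbol factors and the torus shifts have to be tracked simultaneously --- and, in the borderline cases $r=2$ and $r=3$, one must enumerate \emph{all} the non-trivial patterns and the products of Gauss sums attached to them; these cases are genuinely more intricate, as \eqref{two} and \eqref{three} are not single binomials. Once this bookkeeping is set up, the rank-one $SL_2$ integrals, the geometric summations, the convergence estimate, and the final algebraic simplification are routine.
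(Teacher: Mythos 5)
Your overall strategy --- reduce $W^{(r)}(e)$ to a sum over torus elements $h(p^\alpha,p^\beta)$ by iterating rank-one Iwasawa decompositions along the reduced word $w_0=w_aw_bw_aw_bw_aw_b$, with the vanishing conditions $n\mid 4\alpha-2\beta$ and $n\mid 4\beta-2\alpha$ cutting the sum down to the trivial pattern outside $r\in\{2,3,4,5,6,9\}$ --- is exactly the shape of a $G_2$ analogue of the metaplectic-ice formula of \cite{B-B-C-F-G}. But the paper's point of departure (stated in the introduction and implicitly at the start of Section \ref{g2}) is that no such formula is known for $G_2$, and your outline does not supply the missing justification. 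The gap is concrete: after a rank-one step, moving the resulting torus and compact factors through the remaining root subgroups does not leave a product of independent $SL_2$ integrals each yielding ``a geometric series or a Gauss sum.'' The variables couple. In the paper's computation this coupling appears as the factors $\psi(m_2r_2^2)$, $\psi(-(1-pr_2)^2m_2)$ and $\psi(p^2r_2^2r_1)$ produced by the conjugations, which force quadratic exponential sums $\int_{|t|<1}\psi(t^2p^{-m}\epsilon)\,dt$ and two-variable unit integrals such as the one in \eqref{g234} --- an identity the paper does not evaluate directly but instead deduces from the known non-genericity of the Theta representation of $G_2^{(9)}$. These coupled sums are precisely what produce the non-binomial answers \eqref{two} and \eqref{three} and the delicate Gauss-sum cancellations elsewhere; declaring their enumeration ``routine bookkeeping'' is where the proof is missing.

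For comparison, the paper peels off only the single reflection $w_a$ (absorbing it into the function $f_{W_a}$, which is evaluated at the very end via \eqref{g248}) and then attacks the remaining five-dimensional integral over the negative root subgroups head-on, splitting each coordinate into $|t|\le 1$ and $|t|>1$, Iwasawa-decomposing, and tracking the cocycle \eqref{cocycle1} and the cross terms explicitly. Your setup is correct as far as it goes --- the root ordering $a,\,3a+b,\,2a+b,\,3a+2b,\,a+b,\,b$, the character $\psi(t_1+t_6)$, and the vanishing criterion for $f(h(p^\alpha,p^\beta))$ all check out, and the final answer does have the ``sum over patterns with Gauss-sum coefficients'' shape you predict. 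But to turn the outline into a proof you would either have to establish a genuine $G_2$ crystal/ice decomposition (a substantial result in its own right) or carry out the coupled multi-variable computation that the paper actually performs.
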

\begin{proof}
Conjugating the Weyl element to the right, $W^{(r)}(e)$ is equal to
\begin{equation}\label{g21}\notag
I=\int\limits_{F^5}f_{W_a}(x_{-b}(r_1)x_{-(a+b)}(r_2)
x_{-(2a+b)}(r_3)x_{-(3a+b)}(m_1)x_{-(3a+2b)}(m_2))\psi(m_1)dr_idm_j
\end{equation}
where 
\begin{equation}\label{g22}
f_{W_a}(g)=\int\limits_F f^{(r)}(w_ax_a(l)g)\psi(l)dl\notag
\end{equation}

Write $I=I_1+I_2$, where $I_1$ is the contribution to $I$ from the
integration domain $|m_1|\le 1$, and $I_2$ the contribution from $|m_1|>1$. Using
the right invariance property of $f_{W_a}$, we obtain
\begin{equation}\label{g23}
I_1=\int\limits_{F^4}f_{W_a}(x_{-b}(r_1)x_{-(a+b)}(r_2)
x_{-(2a+b)}(r_3)x_{-(3a+2b)}(m_2))dr_idm_2
\end{equation}
For all $|k|\le 1$, we have $f_{W_a}(g)=f_{W_a}(gx_{3a+b}(k))$. Using
that in integral \eqref{g23} and conjugating $x_{3a+b}(k)$ to the
left, we obtain the integral $\int\psi(r_3k)dk$ as an inner integration. In the above we used the left invariant property of 
$f_{W_a}$ and relevant commutation relations in $G_2$. The integration domain is over $|k|\le 1$. Hence, if $|r_3|>1$ we
get zero contribution to $I_1$. Thus, 
\begin{equation}\label{g24}\notag
I_1=\int\limits_{F^3}f_{W_a}(x_{-b}(r_1)x_{-(a+b)}(r_2)
x_{-(3a+2b)}(m_2))dr_idm_2
\end{equation}
Write $I_1=I_{11}+I_{12}$ where $I_{11}$ is the contribution from
the integration domain $|m_2|\le 1$ and $I_{12}$ is the contribution from
$|m_2|>1$. To compute $I_{11}$ we repeat the same process as we
did above with the variable $r_3$. Thus, using $x_{2a+b}(k)$ with
$|k|\le 1$ we deduce that the contribution to $I_{11}$ from the domain
$|r_2|>1$ is zero. Similarly, using $x_{a+b}(k)$ with
$|k|\le 1$ we deduce that the contribution to $I_{11}$ from the domain
$|r_1|>1$ is zero. Thus, we have $I_{11}=f_{W_a}(e)$. 

Next, to compute $I_{12}$ we perform the Iwasawa decomposition of
$x_{-(3a+2b)}(m_2)$ with $|m_2|>1$. We have $x_{-(3a+2b)}(m_2)=
x_{3a+2b}(m_2^{-1})h(m_2^{-1},m_2^{-1})k$ where $k$ is in the maximal
compact subgroup. Plugging this in integral $I_{12}$ we obtain 
\begin{equation}\label{g25}\notag
I_{12}=\int\limits_{|m_2|>1}
\int\limits_{F^2}f_{W_a}(x_{-b}(r_1)x_{-(a+b)}(r_2)
x_{3a+2b}(m_2^{-1})h(m_2^{-1},m_2^{-1}))dr_idm_2
\end{equation}
Conjugating the two right most elements to the left, using the left
invariant properties of $f_{W_a}$, we obtain the integral 
\begin{equation}\label{g26}\notag
I_{12}=\int\limits_{|m_2|>1}
\int\limits_{F^2}|m_2|^2f_{W_a}(h(m_2^{-1},m_2^{-1})
x_{-b}(r_1)x_{-(a+b)}(r_2))\psi(m_2r_2^2)dr_idm_2
\end{equation}
In the above, the factors $|m_2|^2$ and $\psi(m_2r_2^2)$ are derived from the change of variables which is performed during the conjugation. Notice that the matrix $h(m_2^{-1},m_2^{-1})$ commutes
with the matrix $x_a(l)$. Hence, we can perform the same steps we
did in the computation of $I_{11}$, to deduce that the contribution
to $I_{12}$ is zero unless $|r_i|\le 1$ for $i=1,2$. Thus, we have
\begin{equation}\label{g27}\notag
I_{12}=\int\limits_{|m_2|>1}
|m_2|^2f_{W_a}(h(m_2^{-1},m_2^{-1}))\int\limits_{|r_2|\le 1}\psi(m_2r_2^2)dr_2dm_2
\end{equation}
We shall not compute this integral, since it will cancel with another
contribution. However, direct calculation implies that this integral
converges in the domain $|\chi_1\chi_2|<q^{1/2}$.

Next we compute $I_2$. Writing $m_1=p^{-m}\epsilon$ with $m>1$ and 
$|\epsilon|=1$, we obtain that $I_2$ is equal to
\begin{equation}\label{g28}\notag
\sum_{m=1}^\infty q^m\int\limits_{F^4}\int\limits_{|\epsilon|=1}
f_{W_a}(x_{-b}(r_1)x_{-(a+b)}(r_2)
x_{-(2a+b)}(r_3)x_{-(3a+b)}(p^{-m}\epsilon)x_{-(3a+2b)}(m_2))\psi(p^{-m}\epsilon)dr_idm_2d\epsilon
\end{equation}
Notice that $f_{W_a}(g)=f_{W_a}(gh(\epsilon,\epsilon))=
f_{W_a}(h(\epsilon,\epsilon)g)$ for $|\epsilon|=1$. The last equality
is obtained from the property $f^{(r)}(h(\epsilon_1,\epsilon_2)g)=
f^{(r)}(g)$ for all $|\epsilon_i|=1$. We also have the equality  $x_{-(3a+b)}(p^{-m}\epsilon)=
h(\epsilon^{-1},\epsilon^{-1})x_{3a+b}(p^m)h(\epsilon,\epsilon)$. Using that in the above integral, we obtain the integral 
$\int \psi(p^{-m}\epsilon)d\epsilon$ as an inner integration. Here,
we integrate over $|\epsilon|=1$. Hence, we obtain zero integration
if $m>1$. When $m=1$ we get $-q^{-1}$. Hence, 
\begin{equation}\label{g29}\notag
I_2=-\int\limits_{F^4}
f_{W_a}(x_{-b}(r_1)x_{-(a+b)}(r_2)
x_{-(2a+b)}(r_3)x_{-(3a+b)}(p^{-1})x_{-(3a+2b)}(m_2))dr_idm_2
\end{equation}
Using the Iwasawa decomposition $x_{-(3a+b)}(p^{-1})=x_{3a+b}(p)
h(p,1)k$ where $k$ is in the maximal compact subgroup, we obtain after 
conjugation 
\begin{equation}\label{g210}\notag
I_2=-q\int\limits_{F^4}
f_{W_a}(h(p,1)x_{-b}(r_1)x_{-(a+b)}(r_2)
x_{-(2a+b)}(r_3)x_{-(3a+2b)}(m_2))\psi(-r_3)dr_idm_2
\end{equation}
We claim that the contribution to $I_2$ from the integration domain 
$|r_3|\le 1$, is zero. Denote this contribution by $I_2^0$. Then, in this domain, the character 
$\psi(-r_3)$ is trivial. Using the left and right invariant
property of $f_{W_a}$ under $h(\epsilon,\epsilon)$, we obtain a
cocycle contribution which results from the matrix multiplication  $h(\epsilon^{-1},\epsilon^{-1})h(p,1)h(\epsilon,\epsilon)$. Using \eqref{cocycle1}, this conjugation produces the
Hilbert symbol $(\epsilon,p)^2_n$. Thus, we obtain $I_2^0=
(\epsilon,p)^2_nI_2^0$. For all $n$, we can find an $\epsilon$ such that $(\epsilon,p)^2_n$ is not trivial. Hence $I_2^0=0$. Hence, in integral $I_2$, we may restrict to the integration domain where $|r_3|>1$. This is equal to
\begin{equation}\label{g211}\notag
-q\sum_{m=1}^\infty q^m\int\limits_{F^3}
\int\limits_{|\epsilon|=1}
f_{W_a}(h(p,1)x_{-b}(r_1)x_{-(a+b)}(r_2)
x_{-(2a+b)}(p^{-m}\epsilon)x_{-(3a+2b)}(m_2))\psi(-p^{-m}\epsilon)dr_idm_2d\epsilon
\end{equation}
We have $x_{-(2a+b)}(p^{-m}\epsilon)=h(\epsilon^{-1},\epsilon^{-1})
x_{-(2a+b)}(p^{-m})h(\epsilon,\epsilon)$. Conjugating the tori elements to the left and right, we obtain the integral 
$\int (\epsilon,p)^{-2}_n\psi(-p^{-m}\epsilon)d\epsilon$ as
inner integration. This integral is zero if $m>1$, and for $m=1$
it is equal to $q^{-1/2}\overline{G_2^{(n)}(1,p)}$. Here $G_2^{(n)}(1,p)$ is the normalized Gauss sum, so that $|G_2^{(n)}(1,p)|=1$.
Performing the Iwasawa decomposition $x_{-(2a+b)}(p^{-1})=
x_{2a+b}(p)h(p^2,p)k$ where $k$ is in the compact, we obtain 
\begin{equation}\label{g212}\notag
I_2=-q^{11/2}\overline{G_2^{(n)}(1,p)}\int\limits_{F^3}
f_{W_a}(h(p^3,p)x_{-b}(r_1)x_{-(a+b)}(r_2)
x_{-(3a+2b)}(m_2))\psi(-m_2)\psi(2pr_2)dr_idm_2
\end{equation}
Write $I_2$ as a sum of four integrals $I_{2i}$  according to the
contribution from the various integration domains  of the variables $r_2$ and $m_2$. 

We start with $I_{21}$ which  correspond to the integration domain 
$|r_2|,|m_2|\le 1$. Breaking the domain of integration of $r_1$ in a 
similar way, we obtain
\begin{equation}\label{g213}\notag
I_{21}=-q^{11/2}\overline{G_2^{(n)}(1,p)}f_{W_a}(h(p^3,p))
-q^{11/2}\overline{G_2^{(n)}(1,p)}
\int\limits_{|r_1|>1}f_{W_a}(h(p^3,p)x_{-b}(r_1))dr_1
\end{equation}
The second summand is equal to
\begin{equation}\label{g214}\notag
-q^{11/2}\overline{G_2^{(n)}(1,p)}\sum_{m=1}^\infty q^m
\int\limits_{|\epsilon|=1}f_{W_a}(h(p^3,p)h(1,p^m\epsilon^{-1}))
d\epsilon
\end{equation}
In the above we also performed the Iwasawa decomposition for 
$x_{-b}(r_1)$ with $|r_1|>1$. It follows from the cocycle computation right before \eqref{cocycle1}, that the factorization 
$h(1,p^m\epsilon^{-1})=h(1,p^m)h(1,\epsilon^{-1})$ produces the Hilbert symbol $(\epsilon,p)^{2m}_n$.
Hence, the above is equal to
\begin{equation}\label{g215}\notag
-q^{11/2}\overline{G_2^{(n)}(1,p)}\sum_{m=1}^\infty q^m
f_{W_a}(h(p^3,p^{m+1}))\int\limits_{|\epsilon|=1}
(\epsilon,p)^{2m}_nd\epsilon
\end{equation}
Clearly, $f_{W_a}(h(p^3,p^{m+1}))=0$ for all $m\ge 3$. When $m=1$ the
integral over $\epsilon$ is zero for all $n$. So we are left with $m=2$. Thus
\begin{equation}\label{g216}
I_{21}=-q^{11/2}\overline{G_2^{(n)}(1,p)}f_{W_a}(h(p^3,p))
-q^{15/2}\overline{G_2^{(n)}(1,p)}f_{W_a}(h(p^3,p^3))
\int\limits_{|\epsilon|=1}
(\epsilon,p)^4_nd\epsilon
\end{equation}
Next we denote by $I_{22}$ the contribution to $I_2$ from the integration
domain $|r_2|>1$ and $|m_2|\le 1$. Plug in the Iwasawa decomposition 
$x_{-(a+b)}(r_2)=x_{a+b}(r_2^{-1})h(r_2^{-1},r_2^{-2})k$, and 
conjugate the matrices to the left, we obtain
\begin{equation}\label{g217}\notag
I_{22}=-q^{11/2}\overline{G_2^{(n)}(1,p)}\int\limits_{|r_2|>1}
\int\limits_{F}
f_{W_a}(h(p^3,p)h(r_2^{-1},r_2^{-2})x_{-b}(r_1))\psi(2pr_2)
\psi(p^2r_2^2r_1)|r_2|^3dr_i
\end{equation}
Assume $|r_1|\le 1$. Then we obtain as inner integration
$\int \psi(p^2r_2^2r_1)dr_1$ integrated over $|r_1|\le 1$. 
It follows
from the support of the function $f_{W_a}$ that $|p^2r_2|\le 1$. 
Hence $|r_2|=q,q^2$. If $|r_2|=q^2$, we get as an inner integration 
$\int \psi(p^{-2}r_1)dr_1$ which is clearly zero. If $|r_2|=q$, we write
$r_2=p^{-1}\epsilon$ with $|\epsilon|=1$. Using \eqref{cocycle1} we obtain 
$$-q^{19/2}\overline{G_2^{(n)}(1,p)}
f_{W_a}(h(p^4,p^3))\int_{|\epsilon|=1}(\epsilon,p)_n^6 d\epsilon$$
Hence, for this term to be nonzero we must have $r=3$. However,
conjugating in $f_{W_a}$ by $h(\eta,\eta)$ we obtain that 
$f_{W_a}(h(p^4,p^3))$ is not zero only if $(\eta,p)_n^{14}=1$. This is
not the case if $r=3$, and hence we get zero contribution.

Hence,
in integral $I_{22}$ we may restrict to the domain $|r_1|>1$. Performing the Iwasawa decomposition in $r_1$ we obtain
\begin{equation}\label{g218}\notag
I_{22}=-q^{11/2}\overline{G_2^{(n)}(1,p)}\int\limits_{|r_1|,|r_2|>1}
f_{W_a}(h(p^3,p)h(r_2^{-1},r_2^{-2})
h(1,r_1^{-1}))\psi(2pr_2)
\psi(p^2r_2^2r_1)|r_2|^3dr_i
\end{equation}
Write $r_2=p^{-m}\epsilon$ and $r_1=p^{-k}\eta$ where $|\epsilon|=
|\eta|=1$. Then, using \eqref{cocycle1}, and  taking into an account the Hilbert symbol produced by the various factorizations, we obtain
\begin{equation}\label{g219}\notag
I_{22}=-q^{11/2}\overline{G_2^{(n)}(1,p)}\sum_{m,k=1}^\infty
q^{4m+k}f_{W_a}(h(p^{m+3},p^{2m+k+1}))\times
\end{equation}
$$\times \int\limits_{|\epsilon|=
|\eta|=1}(\eta,p)^{2k}_n(\epsilon,p)_n^{6m+6k}\psi(2p^{1-m}\epsilon)
\psi(p^{2-2m-k}\epsilon^2\eta)d\epsilon d\eta$$
Changing variables $\eta\to \eta\epsilon^{-2}$ we deduce from
the integral over $\eta$ that the only nontrivial contribution is from 
$m=k=1$. When this happens, we obtain the integral
$\int (\eta,p)^{2}_n\psi(p^{-1}\eta)d\eta$ as inner integration. 
This integral is equal to $q^{-1/2}G_2^{(n)}(1,p)$. 
Thus, the Gauss sum is cancelled, and we obtain 
\begin{equation}\label{g220}
I_{22}=-q^{10}f_{W_a}(h(p^{4},p^{4}))\int\limits_{|\epsilon|=1}
(\epsilon,p)_n^8d\epsilon
\end{equation}

Next, in $I_2$ we consider the integration domain $|m_2|>1$. Performing the Iwasawa decomposition $x_{-(3a+2b)}(m_2)=
x_{3a+2b}(m_2^{-1})h(m_2^{-1},m_2^{-1})k$, and conjugating those 
matrices to the left, we obtain the contribution of
\begin{equation}\label{g221}
-q^{11/2}\overline{G_2^{(n)}(1,p)}\times
\end{equation}
$$\times\int\limits_{|m_2|>1}\int\limits_{F^2}
f_{W_a}(h(p^3,p)h(m_2^{-1},m_2^{-1})x_{-b}(r_1)x_{-(a+b)}(r_2))
\psi(-(1-pr_2)^2m_2)|m_2^2|dr_idm_2$$
Let $I_{23}$ denote the contribution to integral \eqref{g221}
from the integration domain $|r_2|\le1$. Then, $|pr_2|<1$, and hence 
$1-pr_2$ is a unit which is an $n$ power. Hence, changing variables 
in $m_2$, we obtain 
\begin{equation}\label{g222}\notag
I_{23}=-q^{11/2}\overline{G_2^{(n)}(1,p)}
\int\limits_{|m_2|>1}\int\limits_{F}
f_{W_a}(h(p^3,p)h(m_2^{-1},m_2^{-1})x_{-b}(r_1))
\psi(m_2)|m_2^2|dr_1dm_2
\end{equation}
Write $m_2=p^{-m}\epsilon$. Using \eqref{cocycle1} we obtain
\begin{equation}\label{g223}\notag
I_{23}=-q^{11/2}\overline{G_2^{(n)}(1,p)}\sum_{m=1}^\infty q^{3m}
\int\limits_{F}
f_{W_a}(h(p^{m+3},p^{m+1})x_{-b}(r_1))
\int\limits_{|\epsilon|=1}(\epsilon,p)^{2m}_n\psi(p^{-m}\epsilon)
d\epsilon dr_1
\end{equation}
The inner integration is nonzero only if $m=1$, and for $m=1$ it is
equal to $q^{-1/2}G_2^{(n)}(1,p)$. Thus, the Gauss sum is cancelled,
and we obtain 
\begin{equation}\label{g2231}\notag
I_{23}=-q^8
\int\limits_{F}f_{W_a}(h(p^4,p^2)x_{-b}(r_1))dr_1
\end{equation}
Breaking the integration domain of $r_1$ to $|r_1|\le 1$ and $|r_1|>1$, we obtain 
\begin{equation}\label{g224}
I_{23}=-q^8f_{W_a}(h(p^4,p^2))-q^{10}f_{W_a}(h(p^4,p^4))
\int\limits_{|\epsilon|=1}(\epsilon,p)^4_nd\epsilon
\end{equation}

We are left with integral $I_{24}$ which corresponds in integral $I_2$ to the integration domain over $|r_2|,|m_2|>1$. Performing the Iwasawa decomposition $x_{-(a+b)}(r_2)=x_{a+b}(r_2^{-1})h(r_2^{-1},r_2^{-2})
k$ in integral \eqref{g221}, we obtain after conjugation
\begin{equation}\label{g225}
I_{24}=-q^{11/2}\overline{G_2^{(n)}(1,p)}
\int\limits_{|r_2|,|m_2|>1}\int\limits_{F}
f_{W_a}(h(p^3,p)h(m_2^{-1},m_2^{-1})
h(r_2^{-1},r_2^{-2})x_{-b}(r_1))\times
\end{equation}
$$\times\psi(-(1-pr_2)^2m_2)\psi(p^2r_2^2r_1)|r_2^3m_2^2|dr_idm_2$$
Let $I_{241}$ denote the contribution to $I_{24}$ from the integration domain $|r_1|\le 1$. Writing $r_2=p^{-m}\eta$ and $m_2=p^{-k}\epsilon$, we obtain using \eqref{cocycle1},
\begin{equation}\label{g226}
I_{241}=-q^{11/2}\overline{G_2^{(n)}(1,p)}
\sum_{m,k =1}^\infty q^{4m+3k}
f_{W_a}(h(p^{k+m+3},p^{k+2m+1}))\times
\end{equation}
$$\times \int\limits_{|\epsilon|,|\eta|=1}
(\epsilon,p)_n^{2k+6m}(\eta,p)^{6m}
\psi((1-p^{1-m}\eta)^2p^{-k}\epsilon)d\epsilon d\eta
\int\limits_{|r_1|\le 1}\psi(p^{2-2m}r_1)dr_1$$
From the integration over $r_1$ we deduce that the only nontrivial
contribution to the above integral is when $m=1$. For $|\nu|=1$, using \eqref{cocycle1} the multiplication  
$h(\nu,\nu)^{-1}h(p^{k+4},p^{k+3})h(\nu,\nu)$ produces the Hilbert
symbol $(\nu,p)_n^{14+4k}$. Hence $f_{W_a}(h(p^{k+4},p^{k+3}))=0$ unless $(\nu,p)_n^{14+4k}=1$. In \eqref{g226} with $m=1$ we consider
first the contribution when $\eta\in 1+{\mathcal P}$. Here ${\mathcal P}$ is the maximal ideal in the ring of integers of $F$. Since
$(\eta,p)_n=1$ for such $\eta$, we get the contribution of
\begin{equation}\label{g227}\notag
-q^{19/2}\overline{G_2^{(n)}(1,p)}
\sum_{k =1}^\infty q^{3k}
f_{W_a}(h(p^{k+4},p^{k+3}))\int\limits_{|\epsilon|=1}
(\epsilon,p)_n^{2k+6}\int\limits_{|t|<1}\psi(t^2p^{-k}\epsilon)dtd\epsilon
\end{equation}
The inner integration is equal to
\begin{equation}\label{g228}\notag
\int\limits_{|\epsilon|=1}
(\epsilon,p)_n^{2k+6}\sum_{m=1}^\infty q^{-m}
\int\limits_{|\nu|=1}\psi(p^{2m-k}\nu^2\epsilon)d\nu d\epsilon
\end{equation}
Changing variables $\epsilon\to \epsilon\nu^{-2}$, we obtain 
$\int\limits(\nu,p)_n^{-4k-12}d\nu$ as inner integration. Since we
have $(\nu,p)_n^{14+4k}=1$, this integral is equal to 
$\int\limits(\nu,p)_n^2d\nu$, which is zero. Thus we are left with the contribution to $I_{241}$ from the integration domain $\eta\notin 
1+{\mathcal P}$. Thus, integral $I_{241}$ is equal to
\begin{equation}\label{g229}
-q^{19/2}\overline{G_2^{(n)}(1,p)}
\sum_{k=1}^\infty q^{3k}
f_{W_a}(h(p^{k+4},p^{k+3}))\int\limits_{|\epsilon|=1\ ,\eta\notin 
1+{\mathcal P}}(\epsilon,p)_n^{2k+6}(\eta,p)_n^6\psi((1-\eta)^2p^{-k}
\epsilon)d\eta d\epsilon
\end{equation}
For such $\eta$, we have $|1-\eta|=1$. Changing variables in $\epsilon$, we obtain $\int (\epsilon,p)_n^{2k+6}\psi(p^{-k}\epsilon)d\epsilon$
as inner integration. This is not zero only if $k=1$. Hence
\begin{equation}\label{g230}\notag
I_{241}=-q^{25/2}\overline{G_2^{(n)}(1,p)}
f_{W_a}(h(p^5,p^4))\int\limits_{\eta\notin 
1+{\mathcal P}}(\eta,p)_n^6(1-\eta,p)_n^{-16}d\eta
\int\limits_{|\epsilon|=1}
(\epsilon,p)_n^8\psi(p^{-1}\epsilon)d\epsilon
\end{equation}
Recall from above that $f_{W_a}(h(p^5,p^4))=0$ unless $(\nu,p)_n^{18}
=1$. Hence, 
\begin{equation}\label{g231}
I_{241}=-q^{25/2}\overline{G_2^{(n)}(1,p)}
f_{W_a}(h(p^5,p^4))\int\limits_{\eta\notin 
1+{\mathcal P}}(\eta,p)_n^6(1-\eta,p)_n^2d\eta
\int\limits_{|\epsilon|=1}
(\epsilon,p)_n^8\psi(p^{-1}\epsilon)d\epsilon
\end{equation}
The condition $(\nu,p)_n^{18}=1$ implies that $n=r=3$ or $n=r=9$. In
the first case the integration over $\epsilon$ is equal to
$q^{-1/2}G_2^{(3)}(1,p)$. The Gauss sum is cancelled and we obtain 
\begin{equation}\label{g232}\notag
I_{241}=-q^{12}
f_{W_a}(h(p^5,p^4))\int\limits_{\eta\notin 
1+{\mathcal P}}(1-\eta,p)_3^2d\eta
\end{equation}
Suppose that $\{\epsilon_i\}$ is a set of representatives for ${\mathcal 
O}^*/1+{\mathcal P}$, and assume that $\epsilon_1=1$. Then
$$\int\limits_{\eta\notin 
1+{\mathcal P}}(1-\eta,p)_3^2d\eta=\sum_{\epsilon_i\ne 1}
\ \int\limits_{\eta\in 
\epsilon_i+{\mathcal P}}(1-\eta,p)_3^2d\eta=
\sum_{\epsilon_i\ne 1}
\ \int\limits_{\eta\in 
\epsilon_i+{\mathcal P}}(\eta,p)_3^2d\eta=
\int\limits_{\eta\notin 
1+{\mathcal P}}(\eta,p)_3^2d\eta$$
Here we used the fact that if $\{\epsilon_i\}$ with $i\ne 1$ is a set of nontrivial
representatives then so is $\{\epsilon_i +1\}$ with $i\ne 1$. Since
$$\int\limits_{\eta\notin 
1+{\mathcal P}}(\eta,p)_3^2d\eta=\int\limits_{|\eta|=1}(\eta,p)_3^2d\eta-\int\limits_{\eta\in  1+{\mathcal P}}(\eta,p)_3^2d\eta=
\int\limits_{\eta\in  1+{\mathcal P}}d\eta=-q^{-1}$$
we obtain that when $r=3$,
\begin{equation}\label{g233}
I_{241}=q^{11} f_{W_a}(h(p^5,p^4))
\end{equation}
When $n=r=9$, the integral over $\epsilon$ in integral $I_{241}$
is equal to $q^{-1/2}G_8^{(9)}(1,p)$. At the end of the proof of this Proposition, we will prove that 
\begin{equation}\label{g234}
\int\limits_{\eta\notin 
1+{\mathcal P}}(\eta,p)_9^6(1-\eta,p)_9^2d\eta=
q^{-1/2}G_1^{(9)}(1,p)G_6^{(9)}(1,p)G_2^{(9)}(1,p)
\end{equation}
Cancelling the relevant Gauss sums, we obtain the contribution 
\begin{equation}\label{g235}
I_{241}=-q^{23/2} G_6^{(9)}(1,p)f_{W_a}(h(p^5,p^4))
\end{equation}

To complete the computation of $I_{24}$ we need to consider the 
contribution to \eqref{g225} from the integration domain $|r_1|
>1$. We denote this contribution by $I_{242}$.
Performing the Iwasawa decomposition of $x_{-b}(r_1)$, we obtain
\begin{equation}\label{g236}\notag
I_{242}=-q^{11/2}\overline{G_2^{(n)}(1,p)}
\int\limits_{|r_1|,|r_2|,|m_2|>1}
f_{W_a}(h(p^3,p)h(m_2^{-1},m_2^{-1})
h(r_2^{-1},r_2^{-2})h(1,r_1^{-1}))\times
\end{equation}
$$\times\psi(-(1-pr_2)^2m_2)\psi(p^2r_2^2r_1)|r_2^3m_2^2|dr_idm_2$$
Write $r_1=p^{-l}\nu,\ r_2=p^{-k}\eta$ and $m_2=p^{-m}\epsilon$.
Using \eqref{cocycle1} this is equal to
\begin{equation}\label{g237}\notag
-q^{11/2}\overline{G_2^{(n)}(1,p)}\sum_{m,k,l=1}^\infty q^{3m+4k+l}
f_{W_a}(h(p^{m+k+3},p^{m+2k+l+1}))\times
\end{equation}
$$\times\int\limits(\nu,p)_n^{2l}(\eta,p)_n^{6k+6l}(\epsilon,p)_n
^{2m+6k+2l}\psi(p^{2-2k-l}\eta^2\nu)\psi((1-p^{1-k}\eta)^2p^{-m}
\epsilon)d\nu d\eta d\epsilon$$
Here $\epsilon,\ \eta$ and $\nu$ are integrated over the groups of units in the ring of integers of $F$. Changing variables $\nu\to
\nu\eta^2$, we obtain from the integral over $\nu$ that the only non
trivial contribution is when $k=l=1$. In this case we also obtain a
cancellation of the Gauss sum, and hence
\begin{equation}\label{g238}\notag
I_{242}=-q^{10}\sum_{m=1}^\infty q^{3m}
f_{W_a}(h(p^{m+4},p^{m+4}))\int\limits_{|\epsilon|,|\eta|=1}
(\eta,p)_n^8(\epsilon,p)_n
^{2m+8}\psi((1-\eta)^2p^{-m}
\epsilon) d\eta d\epsilon
\end{equation}
In the integration domain $\eta\notin 1+{\mathcal P}$ we argue as in
\eqref{g229}, to deduce first that the only nontrivial contribution is when $m=1$. When this is the case  we obtain
\begin{equation}\label{g239}
I_{2421}=-q^{13}
f_{W_a}(h(p^5,p^5))
\int\limits_{|\epsilon|=1}(\epsilon,p)_n^{10}
\psi(p^{-1}\epsilon)d\epsilon 
\left (\int\limits_{|\eta|=1}(\eta,p)_n^8d\eta -
q^{-1}\right )
\end{equation}
In the integration domain where $\eta\in 1+{\mathcal P}$, we obtain
\begin{equation}\label{g240}\notag
I_{2422}=-q^{10}\sum_{m=1}^\infty q^{3m}
f_{W_a}(h(p^{m+4},p^{m+4}))\int\limits_{|\epsilon|=1}
(\epsilon,p)_n^{2m+8}\int\limits_{|t|<1}\psi(t^2p^{-m}
\epsilon)dt d\epsilon
\end{equation}
Write $t=p^k\nu$, then shifting $m\to m+4$ and changing variables 
$\epsilon\to \epsilon\nu^{-2}$, we obtain
\begin{equation}\label{g241}\notag
I_{2422}=-q^{10}\sum_{m=5}^\infty q^{3m-12}
f_{W_a}(h(p^m,p^m))\int\limits_{|\nu|=1}(\nu,p)_n^{-4m}d\nu
\int\limits_{|\epsilon|=1}
(\epsilon,p)_n^{2m}\sum_{k=1}^\infty \psi(p^{2k-m+4}
\epsilon) d\epsilon
\end{equation}
Shift $k\to k-2$ and write
\begin{equation}\label{g242} 
-\sum_{m=5}^\infty \sum_{k=3}^\infty = -\sum_{m=1}^\infty \sum_{k=0}^\infty + \sum_{m=1}^\infty \sum_{k=0}^2 + \sum_{m=1}^4 \sum_{k=3}^\infty
\end{equation}
It is not hard to check that the first summand on the right hand side of equation \eqref{g242} cancels with integral $I_{12}$. This can be 
verified without actually computing the terms. 
The third summand on the right hand side of equation \eqref{g242} is 
equal to
\begin{equation}\label{g243} 
\sum_{m=1}^4 q^{3m}\sum_{k=3}^\infty q^{-k}
f_{W_a}(h(p^m,p^m))\int\limits_{|\nu|=1}(\nu,p)_n^{-4m}d\nu
\int\limits_{|\epsilon|=1}
(\epsilon,p)_n^{2m}d\epsilon=
\end{equation}
$$=q^{-3}\sum_{m=1}^4 q^{3m}f_{W_a}(h(p^m,p^m))\int\limits_{|\epsilon|=1}
(\epsilon,p)_n^{2m}d\epsilon$$
In the last equality we used that fact the $f_{W_a}(h(p^m,p^m))=0$
unless $(\eta,p)^{4m}_n=1$. This follows from \eqref{cocycle1}.

The second summand on the right hand side of equation  \eqref{g242}
is equal to 
\begin{equation}\label{g244}\notag 
\sum_{m=1}^\infty q^{3m}
f_{W_a}(h(p^m,p^m))\int\limits_{|\nu|=1}(\nu,p)_n^{-4m}d\nu
\int\limits_{|\epsilon|=1}
(\epsilon,p)_n^{2m}(\psi(p^{-m}\epsilon)+q^{-1}\psi(p^{2-m}\epsilon)
+q^{-2}\psi(p^{4-m}\epsilon))d\epsilon
\end{equation}
Thus, if $m>5$ we get zero contribution. Adding this to integral 
\eqref{g243}, we obtain 
\begin{equation}\label{g245} 
I_{2422}=q^{5/2}f_{W_a}(h(p,p))\int\limits_{|\nu|=1}(\nu,p)_n^{-4}d\nu
+q^{-2}\sum_{m=1}^4 q^{3m}
f_{W_a}(h(p^m,p^m))
\int\limits_{|\epsilon|=1}
(\epsilon,p)_n^{2m}d\epsilon
\end{equation}
$$+q^{-1}\sum_{m=1}^3 q^{3m}
f_{W_a}(h(p^m,p^m))\int\limits_{|\nu|=1}(\nu,p)_n^{-4m}d\nu
\int\limits_{|\epsilon|=1}
(\epsilon,p)_n^{2m}\psi(p^{2-m}\epsilon)d\epsilon+$$
$$+q^{13}f_{W_a}(h(p^5,p^5))\int\limits_{|\nu|=1}(\nu,p)_n^{-20}d\nu
\int\limits_{|\epsilon|=1}
(\epsilon,p)_n^{10}\psi(p^{-1}\epsilon)d\epsilon$$

To complete the proof of the Proposition we need to analyse the 
contribution from $f_{W_a}(e)$, and from the integrals given by
\eqref{g216}, \eqref{g220}, \eqref{g224}, \eqref{g233}, \eqref{g235},
\eqref{g239}, and \eqref{g245}. For that we need to compute 
\begin{equation}\label{g246}\notag
f_{W_a}(h(p^{l+k},p^k))=\int\limits_F f(w_a x_a(y)h(p^{l+k},p^k))
\psi(y)dy
\end{equation}
where we write $f$ for $f^{(r)}$.
Conjugating the torus to the left and then breaking the integration domain in $y$ to $|y|\le 1$ and $|y|>1$, we obtain, 
\begin{equation}\label{g247}\notag
q^{-l}f(h(p^{k},p^{l+k}))+q^{-l}\sum_{m=1}^\infty q^m
f(h(p^{k+m},p^{l+k-m}))\int\limits_{|\epsilon|=1}(\epsilon,p)_n^{6m}
\psi(p^{l-m}\epsilon)d\epsilon
\end{equation}
In the above we also used \eqref{cocycle1} to determine that the 
product $h(p^m,p^{-m})h(\epsilon,\epsilon^{-1})$ contributes the
Hilbert symbol $(\epsilon,p)_n^{6m}$. The contribution to the sum over $m$ is zero if $m>l+1$. Hence, the above is equal to 
\begin{equation}\label{g248}
q^{-l}f(h(p^{k},p^{l+k}))+q^{-l}\sum_{m=1}^l q^m
f(h(p^{k+m},p^{l+k-m}))\int\limits_{|\epsilon|=1}(\epsilon,p)_n^{6m}
d\epsilon + 
\end{equation}
$$+qf(h(p^{k+l+1},p^{k-1}))
\int\limits_{|\epsilon|=1}(\epsilon,p)_n^{6(l+1)}
\psi(p^{-1}\epsilon)d\epsilon$$
Given $l$ and $k$, it is now easy to evaluate \eqref{g248}. For example,
to compute $f_{W_a}(e)$ we need to plug in \eqref{g248} the values $l=k=0$. We then get 
\begin{equation}\label{g249}\notag
f_{W_a}(e)=f(e)+qf(h(p,p^{-1}))\int\limits_{|\epsilon|=1}(\epsilon,p)_n^{6}\psi(p^{-1}\epsilon)d\epsilon
\end{equation}
It follows from \eqref{cocycle1} that $f(h(p,p^{-1}))=0$ unless $n=r=3$. In that case we obtain $f(h(p,p^{-1}))=\chi_1\chi_2^{-1}q^{-1}$. Hence, if $r=3$ then $f_{W_a}(e)=1-\chi_1\chi_2^{-1}q^{-1}$ and for all other values of $r$ we get $f_{W_a}(e)=1$.

As an another example, consider the first summand in \eqref{g216}. In this case $l=2$ and $k=1$, and we obtain
\begin{equation}\label{g250}\notag 
f_{W_a}(h(p^3,p))=q^{-2}f(h(p,p^3))+q^{-2}\sum_{m=1}^2 f(h(p^{m+1},
p^{3-m}))\int\limits_{|\epsilon|=1}(\epsilon,p)_n^{6m}d\epsilon
+
\end{equation}
$$+qf(h(p^4,1))\int\limits_{|\epsilon|=1}(\epsilon,p)_n^{18}\psi(p^{-1}\epsilon)d\epsilon$$
Using \eqref{cocycle1}, the first summand is zero, and also the 
corresponding summand when $m=2$. When $m=1$ the integral over $\epsilon$ implies that that if $r\ne 3$ the corresponding term is zero. However, if $r=3$ then it follows from \eqref{cocycle1} that
$f(h(p^2,p^2))=0$. Hence, the only nonzero contribution is from
the last summand and this happens only if $r=2$ or $r=4$. In this case, the integral over $\epsilon$ is equal to $q^{-1/2}$ if $r=2$,
and if $r=4$ then $n=8$, and the integral  is equal to $q^{-1/2}G_2^{(8)}$. Overall, the first summand of \eqref{g216} is equal to 
$-q^{-2}\chi_1^4$ when $r=2,4$ and zero for all other values of $r$. All other terms are treated in the same way. We also mention that there
are some cancellations between the terms. For example, the term in
equation \eqref{g220} is cancelled with the contribution when $m=4$
in the second summand of \eqref{g245}. We will omit the details.

To complete the proof of the Proposition we still need to prove 
identity \eqref{g234}. A direct calculation is possible, however the following argument is simpler. Let $A$ denote the left hand side of equation
\eqref{g234}. When $r=9$ we get
zero contribution from all terms except from $f_{W_a}(e)$ and  \eqref{g231}. Thus we deduce that 
\begin{equation}\label{g251}
W^{(9)}(e)=1-q^{12}\overline{G_2^{(n)}(1,p)}G_8^{(9)}(1,p)A
f_{W_a}(h(p^5,p^4))
\end{equation}
Here we used the fact that  $f_{W_a}(e)=1$ when $r=9$.
Using equation \eqref{g248} with $k=4$ and $l=1$, we obtain 
$$f_{W_a}(h(p^5,p^4))=q^{1/2}G_3^{(9)}(1,p)f(h(p^6,p^3))=
q^{-29/2}G_3^{(9)}(1,p)\chi_1^6\chi_2^3$$
Hence $W^{(9)}(e)=1-Aq^{-5/2}\overline{G_2^{(n)}(1,p)}
G_8^{(9)}(1,p)G_3^{(9)}(1,p)\chi_1^6\chi_2^3$. From \cite{Gao} Section 7 it
follows that the Theta representation of the group $G_2^{(9)}$ has
no nonzero Whittaker function. It also follows from that reference, see bellow Subsection \ref{prg2}, that the parameters of the Theta
representations are $\chi_1=q^{4/9}$ and $\chi_2=q^{1/9}$. Thus,
plugging this in the above identity, we obtain $0=1-Aq^{1/2}\overline{G_2^{(n)}(1,p)}G_8^{(9)}(1,p)G_3^{(9)}(1,p)$. From this 
equation \eqref{g234} follows.

\end{proof}

Based on the proof of Proposition \ref{prop2}, we can compute the
values of $W^{(r)}_\eta(e)$ for other tori elements $\eta$. Let
$\eta(\alpha,\beta)=\text{diag}(p^\alpha,p^\beta,p^{\alpha-\beta},
1,p^{-\alpha+\beta},p^{-\beta},p^{-\alpha})$. Here $\alpha$ and 
$\beta $ are two integers. We start with
\begin{proposition}\label{prop3}
Integral $W^{(r)}_{\eta(\alpha,-\alpha)}(e)$ converges in the domain $|\chi_1\chi_2|<q^{1/2}$. We have,
\begin{equation}\label{two1}
W^{(2)}_{\eta(1,-1)}(e)=q^{-3/2}\chi_1^2\chi_2^{-2}(1+\chi_2^2)
(1+q^{-1}\chi_1^2\chi_2)(1-q^{-1}\chi_2^2)(1-q^{-1}\chi_1^2\chi_2)
\end{equation}
\begin{equation}\label{four1}
W^{(4)}_{\eta(3,-3)}(e)=q^{-7/2}G_6^{(8)}(1,p)\chi_1^4
\chi_2^{-4}(1-q^{-1}\chi_2^4)
\end{equation}
\begin{equation}\label{six1}
W^{(6)}_{\eta(1,-1)}(e)=q^{-3/2}\chi_1^2\chi_2^{-2}
(1-q^{-2}\chi_1^4\chi_2^2+(1-q^{-1})q^{-1}\chi_1^2\chi_2^4)
\end{equation}
\begin{equation}\label{nine1}
W^{(9)}_{\eta(5,-5)}(e)=q^{-11/2}G_6^{(9)}(1,p)\chi_1^6
\chi_2^{-6}(1-q^{-2}\chi_1^3\chi_2^6)
\end{equation}
\end{proposition}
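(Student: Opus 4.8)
The plan is to re-run the computation in the proof of Proposition \ref{prop2} with the fixed torus element $\eta = \eta(\alpha,-\alpha) = h(p^\alpha, p^{-\alpha})$ placed on the far left of the integrand. The point is that inserting $\eta$ on the left of $f^{(r)}(w_0 u)$ does not disturb any of the manipulations of the variable $u$: the splitting of the integration domains of $r_1,r_2,r_3,m_1,m_2$ into $|\cdot|\le 1$ and $|\cdot|>1$, the successive Iwasawa decompositions of the negative-root coordinates, and the conjugations that produced the various $\psi$-factors all take place to the right of $\eta$ and ignore it. Concretely, for each $u$ one has the Iwasawa decomposition $w_0 u = n(u)a(u)k(u)$, hence $\eta w_0 u = (\eta n(u)\eta^{-1})(\eta a(u))k(u)$, so $f^{(r)}_\chi(\eta w_0 u)$ is nonzero exactly when $\eta a(u)\in T_0^{(r)}$, i.e. when the $(k_1,k_2)$-valuation of the torus part $a(u)$ is congruent to $(-\alpha,\alpha)$ rather than $(0,0)$ modulo $r$, and on the surviving strata the value acquires the extra factor $\delta_B^{1/2}\chi(\eta)$ --- an explicit power of $q$ times $\chi_1^\alpha\chi_2^{-\alpha}$ --- besides the same factor $\delta_B^{1/2}\chi(a(u))$ as in Proposition \ref{prop2}. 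In the metaplectic setting the passage of $\eta$ through $w_0$ and the unipotent (using that $w_0 = -1$ on the torus of $G_2$) is governed by \eqref{cocycle1}, so the net effect on each inner unit integral $\int_{|\epsilon|=1}(\epsilon,p)_n^k(\cdots)\,d\epsilon$ is merely to shift the exponent $k$ by an explicit $\eta$-dependent amount.

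So I would go through the list of contributions $f_{W_a}(e)$, \eqref{g216}, \eqref{g220}, \eqref{g224}, \eqref{g233}, \eqref{g235}, \eqref{g239}, \eqref{g245} from the proof of Proposition \ref{prop2}, now with the shifted congruence conditions, discard those that no longer survive, re-evaluate the rest using the analogue of \eqref{g248} with $\eta$-shifted arguments, and add them up; for each of the four pairs $(\alpha,r)\in\{(1,2),(3,4),(1,6),(5,9)\}$ only a handful of strata contribute, producing \eqref{two1}, \eqref{four1}, \eqref{six1}, \eqref{nine1}. The Gauss sums $G_6^{(8)}(1,p)$ and $G_6^{(9)}(1,p)$ in \eqref{four1} and \eqref{nine1} appear precisely where the torus-level condition (modulo $r$, coming from $T_0^{(r)}$) no longer matches the stricter Hilbert-symbol condition (modulo $n=2r$ or $n=r$) controlling an inner integral $\int_{|\epsilon|=1}(\epsilon,p)_n^k\psi(p^{-1}\epsilon)\,d\epsilon = q^{-1/2}G_k^{(n)}(1,p)$, so no pairwise cancellation of Gauss sums occurs, unlike in the $\eta = e$ case. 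The convergence statement is inherited from Proposition \ref{prop2}: translating the argument of $f^{(r)}$ by the fixed $\eta$ changes the integrand only by a bounded factor, so the same estimates give convergence in $|\chi_1\chi_2|<q^{1/2}$ and the potentially divergent pieces still cancel.

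The main obstacle is purely the bookkeeping. For each of the four $(\alpha,r)$ one must determine exactly which of the many strata from the proof of Proposition \ref{prop2} survive the shifted congruence, and then track through each of them the accumulated power of $q$, the $\chi_1,\chi_2$-monomial, and any residual Gauss sum, including every cocycle contribution forced by \eqref{cocycle1} when a torus element is split across a unit; one should also verify whether any surviving $r=9$ stratum needs the auxiliary identity \eqref{g234}, which is available if so. There is no new mathematical input beyond the proof of Proposition \ref{prop2} --- which is why the author says the computation ``follows quite easily'' --- but care is required to land exactly on the factorized expressions in the statement.
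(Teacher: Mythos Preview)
Your plan is essentially the paper's: replace $f_{W_a}(h)$ throughout the list of contributions from Proposition~\ref{prop2} by $f_{W_a}^{\alpha}(h)=\int_F f(h(p^{\alpha},p^{-\alpha})w_a x_a(y)h)\psi(y)\,dy$ and re-evaluate each term via the $\alpha$-shifted analogue of \eqref{g248}. However, you misidentify the mechanism when you say the exponents $k$ in the inner unit integrals $\int_{|\epsilon|=1}(\epsilon,p)_n^{k}(\cdots)\,d\epsilon$ get shifted. They do \emph{not}: by \eqref{cocycle1} the conjugation $h(\epsilon,\epsilon)^{-1}h(p^{\alpha},p^{-\alpha})h(\epsilon,\epsilon)$ contributes $(\epsilon,p)_n^{6\alpha}(\epsilon,p)_n^{-6\alpha}=1$, so the invariance $f_{W_a}^{\alpha}(h(\epsilon,\epsilon)h)=f_{W_a}^{\alpha}(h)$ holds exactly as for $\alpha=0$. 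This is the whole point of the special choice $\eta=h(p^{\alpha},p^{-\alpha})$ and is what the paper isolates as the one property to verify: because it holds, the entire derivation of the expressions \eqref{g216}, \eqref{g220}, \eqref{g224}, \eqref{g233}, \eqref{g235}, \eqref{g239}, \eqref{g245} goes through \emph{verbatim}, with no ``shifted congruence conditions'' and nothing to discard or re-derive. The only change is at the very last step, where each $f_{W_a}(h(p^{l+k},p^{k}))$ becomes $f_{W_a}^{\alpha}(h(p^{l+k},p^{k}))$ and is evaluated by the obvious $\alpha$-translate of \eqref{g248}; different torus elements now meet the $T_0^{(r)}$-support condition on $f$, and the stated formulas (including the surviving Gauss sums in \eqref{four1} and \eqref{nine1}) drop out. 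The paper illustrates this explicitly for $r=6$, $\alpha=1$.
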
 
\begin{proof}
First, we mention that the cases when $r=3$ and $r=5$ are not included
since they give no new information. To prove the above result we 
claim that all we need to do is to consider the expressions given
by $f_{W_a}(e)$, and from the integrals given by
\eqref{g216}, \eqref{g220}, \eqref{g224}, \eqref{g233}, \eqref{g235},
\eqref{g239}, and \eqref{g245}, where we replace $f_{W_a}(h)$  by
$f_{W_a}^{\alpha}(h)$. Here, $h$ is a diagonal matrix, and
\begin{equation}\label{g260}
f_{W_a}^{\alpha}(h)=\int\limits_F f(h(p^\alpha,p^{-\alpha})w_a
x_a(y)h)\psi(y)dy
\end{equation}
To assert this, we notice that in the case when  $\alpha=0$, 
the only property which we used and is needed to be check here, is the
the property that $f_{W_a}(h(\epsilon,\epsilon)h)=f_{W_a}(h)$ for all $\epsilon$ which is unit. In other words, we need to verify that 
for $|\epsilon|=1$ we have 
$f_{W_a}^\alpha(h(\epsilon,\epsilon)h)=f_{W_a}^\alpha(h)$. But this
easily follows from \eqref{cocycle1}, since the conjugation 
$h(\epsilon^{-1},\epsilon^{-1})h(p^\alpha,p^{-\alpha})h(\epsilon,\epsilon)$ produces no Hilbert symbol. 

To illustrate this computation, consider the case when $r=6$. It 
is not hard to check that we obtain 
$$W^{(6)}_{\eta(\alpha,-\alpha)}(e)=f_{W_a}^\alpha(e)-q^8f_{W_a}^\alpha(h(p^4,p^2))+(1-q^{-1})q^{15/2}f_{W_a}^\alpha(h(p^3,p^3))$$
Indeed, the second summand on the right hand side is derived from the
first summand in equation \eqref{g224}, and the last summand on the the right hand side is derived from the second summand in equation \eqref{g245}. All of the other terms contribute zero when $r=6$.
Using \eqref{g248}, we obtain 
$$f_{W_a}^\alpha(e)=f(h(p^{\alpha},p^{-\alpha})+q^{1/2}
f(h(p^{\alpha+1},p^{-\alpha-1}))$$
$$f_{W_a}^\alpha(h(p^4,p^2))=q^{-2}
f(h(p^{\alpha+2},p^{-\alpha+4}))+(1-q^{-1})
f(h(p^{\alpha+4},p^{-\alpha+2}))+q^{1/2}
f(h(p^{\alpha+5},p^{-\alpha+1}))$$
$$f_{W_a}^\alpha(h(p^3,p^3))=f(h(p^{\alpha+3},p^{-\alpha+3}))
+q^{1/2}f(h(p^{\alpha+4},p^{-\alpha+2}))$$
Plugging $\alpha=1$ we obtain identity \eqref{six1}.

\end{proof}

When $r=2,3$ we can get some more information if we compute some
more Whittaker functions. We have,
\begin{proposition}\label{prop4}
Integrals $W^{(2)}_{(1,0)}(e)$,\  $W^{(2)}_{(0,1)}(e)$ and $W^{(3)}_{(0,2)}(e)$ converges in the domain $|\chi_1\chi_2|<q^{1/2}$. In that domain we have $W^{(2)}_{(1,0)}(e)=0$, and 
\begin{equation}\label{two2}
W^{(2)}_{(0,1)}(e)=q^{-3/2}\chi_2^2(1-q^{-1}\chi_1^2\chi_2^{-2})
(1-q^{-2}\chi_1^4\chi_2^4)
\end{equation}
Also, $W^{(3)}_{(0,2)}(e)$ is equal to
\begin{equation}\label{three2}
q^{-5/2}\chi_2^3(1-q^{-1}\chi_1\chi_2^{-1})
\left [q^{-5/2}\chi_1^5\chi_2^4+G_2^{(3)}(1,p)(1+\chi_1\chi_2^{-1}
-q^{-1}\chi_1^3-q^{-1}\chi_1^2\chi_2-q^{-1}\chi_1\chi_2^2)\right ]
\end{equation}
\end{proposition}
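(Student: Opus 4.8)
The plan is to reuse the machinery developed in the proof of Proposition \ref{prop2}, modifying only the torus element that sits to the \emph{left} of the Weyl element. Concretely, I would start from the same unfolding: after conjugating $w_0$ to the right, $W^{(r)}_{\eta}(e)$ becomes an integral over $F^5$ of $f_{W_a}$ evaluated on the product of negative root subgroups $x_{-b}(r_1)x_{-(a+b)}(r_2)x_{-(2a+b)}(r_3)x_{-(3a+b)}(m_1)x_{-(3a+2b)}(m_2)$, except that $f_{W_a}$ is now replaced by $g\mapsto f^{(r)}(\eta w_a x_a(l)g)$. Since $\eta=\eta(\alpha,\beta)=h(p^\alpha,p^\beta)$ is a diagonal element, it simply passes through $w_a x_a(l)$ and gets absorbed. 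The entire argument of Proposition \ref{prop2} — the splitting into $I_1,I_2$, the Iwasawa decompositions of the various $x_{-\gamma}(\cdot)$ with large argument, the Hilbert-symbol obstructions forcing most $m,k,l$ to equal $1$, and the cancellations (e.g.\ the first summand of \eqref{g242} against $I_{12}$) — depended only on the invariance property $f_{W_a}(h(\epsilon,\epsilon)h)=f_{W_a}(h)$ for units $\epsilon$, and on \eqref{cocycle1}. As the authors note, for $f_{W_a}^\alpha$ defined by \eqref{g260} the conjugation $h(\epsilon^{-1},\epsilon^{-1})h(p^\alpha,p^{-\alpha})h(\epsilon,\epsilon)$ produces no Hilbert symbol, so that invariance survives; hence every structural step goes through verbatim and one lands on the same list of surviving terms: $f^\alpha_{W_a}(e)$ together with the integrals \eqref{g216}, \eqref{g220}, \eqref{g224}, \eqref{g233}, \eqref{g235}, \eqref{g239}, \eqref{g245}, with each occurrence of $f_{W_a}(h)$ replaced by $f^\alpha_{W_a}(h)$.

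For $\eta=(1,0)$, i.e.\ $\eta=h(p,1)$, the claim $W^{(2)}_{(1,0)}(e)=0$ is not of that diagonal-symmetric form $h(p^\alpha,p^{-\alpha})$, so the clean "no Hilbert symbol" observation fails; instead the conjugation $h(\epsilon^{-1},\epsilon^{-1})h(p,1)h(\epsilon,\epsilon)$ produces $(\epsilon,p)_n^{2}$ by \eqref{cocycle1}. The strategy here is the one already used for $I_2^0$ in Proposition \ref{prop2}: use left/right $h(\epsilon,\epsilon)$-invariance of $f^{(r)}$ to rewrite the whole integral as $(\epsilon,p)_n^2$ times itself, and since for $r=2$ (so $n=4$) there is a unit $\epsilon$ with $(\epsilon,p)_4^2\ne 1$, conclude the integral vanishes identically in $\chi$. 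This should be a short self-contained paragraph rather than a rerun of the long computation.

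For $\eta=(0,1)=h(1,p)$ with $r=2$ and $\eta=(0,2)=h(1,p^2)$ with $r=3$, I would pin down which of the surviving terms are actually nonzero for the given $r$ (for $r=2$ the relevant terms are essentially those that produced \eqref{two}, and for $r=3$ those producing \eqref{three}), obtaining a short explicit linear combination of values $f^\alpha$-type integrals $f^{(r)}(h(p^a,p^b))$ where now the shifts $(a,b)$ are computed from \eqref{g248} applied to $f_{W_a}$ of the form $g\mapsto f^{(r)}(h(1,p^\beta)w_ax_a(l)g)$. The point is that $h(1,p^\beta)$ commutes with $x_a(l)$ only up to a torus twist on the $b$-coordinate, so \eqref{g248} must be applied with the parameter $k$ shifted accordingly; after that one simply evaluates $f^{(r)}$ on diagonal matrices using \eqref{cocycle1} to see which are nonzero (those $h(p^a,p^b)$ with $(\nu,p)_n^{4a-2b}=(\nu,p)_n^{-2a+4b}=1$) and with what value in terms of $\delta_B^{1/2}\chi$. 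Summing the finitely many surviving monomials and factoring yields \eqref{two2} and \eqref{three2}.

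The main obstacle I anticipate is bookkeeping rather than conceptual: getting the exponents $(a,b)$ and the powers of $q$ exactly right in the shifted version of \eqref{g248}, and correctly tracking the Gauss-sum cancellations (the normalized $\overline{G_2^{(n)}(1,p)}$ picked up early in $I_2$ must cancel against $G_2^{(n)}(1,p)$ factors arising later, exactly as in Proposition \ref{prop2}), and for the $r=3$ case handling the residual $G_2^{(3)}(1,p)$ that does not cancel — which is precisely why \eqref{three2} contains an explicit $G_2^{(3)}(1,p)$ while \eqref{three} did not. A secondary subtlety is confirming that the cancellation of the first summand of \eqref{g242} against $I_{12}$, asserted in Proposition \ref{prop2} "without actually computing the terms", still works after inserting $\eta$ on the left; this is fine because that cancellation was an identity of integrals over the same $f_{W_a}$-values and is insensitive to the leftmost torus element. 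Once these are in hand, the factored forms stated in Proposition \ref{prop4} follow by direct algebra.
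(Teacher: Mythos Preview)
Your plan has a genuine gap. You correctly observe that $\eta=h(p,1)$ is not of the form $h(p^\alpha,p^{-\alpha})$ and that therefore the key invariance $f^{\eta}_{W_a}(h(\epsilon,\epsilon)h)=f^{\eta}_{W_a}(h)$ fails: by \eqref{cocycle1} the conjugation $h(\epsilon^{-1},\epsilon^{-1})h(p,1)h(\epsilon,\epsilon)$ produces $(\epsilon,p)_n^{2}$. But you then proceed as if $\eta=h(1,p)$ and $\eta=h(1,p^2)$ \emph{do} enjoy that invariance, and propose to read off the answer from the same list of surviving terms \eqref{g216}--\eqref{g245}. They do not: by the same formula, $h(\epsilon^{-1},\epsilon^{-1})h(1,p^\beta)h(\epsilon,\epsilon)$ produces $(\epsilon,p)_n^{2\beta}$, which is nontrivial for $(r,\beta)=(2,1)$ and $(r,\beta)=(3,2)$. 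This invariance is precisely what was used in the proof of Proposition~\ref{prop2} to kill $I_2^0$ and in several later steps; without it, the reduction to that particular finite list of torus evaluations is no longer valid, and additional terms survive. The paper's own proof says exactly this: ``we cannot directly use the computations carried out in the proof of Proposition~\ref{prop2} \dots\ because we no longer have the analogous condition to $f_{W_a}(h(\epsilon,\epsilon)h)=f_{W_a}(h)$.'' One must rerun the unfolding from the beginning, tracking the extra Hilbert symbols at each step; the computation is parallel in spirit but not a substitution into the old formulas.

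A smaller issue: your shortcut for $W^{(2)}_{(1,0)}(e)=0$ via the $I_2^0$ trick is not quite as written. In $I_2^0$ the character $\psi$ had already been integrated away on the relevant domain, so inserting $h(\epsilon,\epsilon)$ on both sides cost nothing. In the full Whittaker integral the change of variables $u\mapsto h(\epsilon,\epsilon)uh(\epsilon^{-1},\epsilon^{-1})$ rescales the $b$-coordinate of $u$ (one checks from the root values that $a(h(\epsilon,\epsilon))=1$ but $b(h(\epsilon,\epsilon))=\epsilon$), so $\psi_U$ is not preserved and the argument does not close up to $(\epsilon,p)_4^{2}\cdot W=W$ as you claim. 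The vanishing $W^{(2)}_{(1,0)}(e)=0$ is true, but it comes out of the direct (re)computation rather than from a one-line symmetry argument.
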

\begin{proof}
To prove this Proposition we cannot directly use the computations carried out in the proof of Proposition \ref{prop2}. This is because we no longer have the analogous  condition to the condition $f_{W_a}(h(\epsilon,\epsilon)h)=f_{W_a}(h)$
which we used there. Nevertheless the computations are similar and will be omitted here.

\end{proof}

\section{ The proof of Theorem \ref{th1}}\label{prth}
In this section we prove Theorem \ref{th1}. For the group $G_2$, in most cases the proof follows immediately from the result of the previous Section. For example, for the group $G_2^{(3)}$ we get the
condition $\chi_1=q\chi_2$ from \eqref{three} and  \eqref{three2}.

 However, for the group $GL_n$ and also for $G_2$ with $r=5$, this is not enough. In these cases the proof is based on the study of intertwining operators. Using similar notations as in \cite{K-P}, we denote
\begin{equation}\label{pr1}\notag
\lambda_\eta(f_\chi^{(r)})=\int\limits_U f^{(r)}_\chi(\eta w_0u)\psi_U(u)du
\end{equation}
Here $f_\chi^{(r)}$ is the unramified vector in $Ind_{B^{(r)}}^{G^{(r)}}\chi\delta_B^{1/2}$, and $\eta\in T_0^{(r)}\backslash T^{(r)}$. When $\eta=1$, we shall denote 
$\lambda_\eta(f_\chi^{(r)})$ by $\lambda(f_\chi^{(r)})$.
Given $w\in W$, let $I_w$ denote the
corresponding intertwining operator, and suppose that for the
specific $\chi$ in question, this intertwining operator $I_wf_\chi^{(r)}$ converges. 
Then we have 
\begin{equation}\label{pr2}
\lambda(I_wf_\chi^{(r)})=c_w(\chi)\lambda(f_{^w\chi}^{(r)})
\end{equation}
Here $c_w(\chi)$ is a scalar depending on $\chi$, and $^w\chi(t)=
\chi(w^{-1}tw)$ for all $t\in T$. 

Fix an unramified character $\chi_0$ of $T$. 
Suppose that the unramified sub-representation  of  $Ind_{B^{(r)}}^{G^{(r)}}\chi_0\delta_B^{1/2}$ has no
nonzero Whittaker function. In particular this means that the meromorphic continuation of
$\lambda(f_\chi^{(r)})$ and $\lambda(I_wf_\chi^{(r)})$ are zero at
$\chi_0$ for all $w$ as above. We mention that to apply the intertwining operator $I_w$ we should check that it is well defined at $\chi=\chi_0$. Depending on $\chi_0$ we then find Weyl
elements such that the above vanishings implies the Theorem. 

\subsection{ The proof in the case $G=GL_n$}\label{prgl}
The Theorem is clearly true if $r\ge n$. So we assume that $r=n-1$. 
It follows from \cite{K-P} that $\Theta^{(n-1)}$ is the unramified 
constituent of $Ind_{B^{(r)}}^{G^{(r)}}\mu\delta_B^{-1/2(n-1)}\delta_B^{1/2}$. Here $\mu$ is a character of $F^*$. It is viewed as a character
of $T^{n-1}$ as follows. For $t=\text{diag}(t_1,\ldots,t_n)\in T^{n-1}$ define $\mu(t)=\mu(t_1)\mu(t_2)\ldots\mu(t_n)$. Thus, to prove the Theorem, we need to prove that under the assumption that the unramified subrepresentation of
$Ind_{B^{(r)}}^{G^{(r)}}\chi_0\delta_B^{1/2}$ has no nonzero Whittaker 
functions, then $\chi_0=\mu\delta_B^{-1/2n}$ for a suitable character
$\mu$. 

For all $1\le i\le n$, denote  $a_i=\chi_{0,i}(p)$. We start by proving
that for all $1\le i\le n-1$ we have $a_i=q^{j/(n-1)}a_n$ for some
$n-i\le j\le n-1$. We argue by induction on $i$. If $i=1$, then
it follows from the assumption and from Proposition \ref{prop1} that
$1-q^{-(n-1)}a_1^{n-1}a_n^{-(n-1)}=0$. Hence $a_1=qa_n$. Consider $i=2$. If 
$a_2=a_1$, there is nothing to prove. If $a_2\ne a_1$, then the
the intertwining operator $I_{w_1}$ is well defined at $\chi_0$, and
hence we obtain from \eqref{pr2} that 
$$(1-q^{-1}a_1^{n-1}a_2^{-(n-1)})(1-q^{-(n-1)}a_2^{n-1}a_n^{-(n-1)})=0$$ 
The second term is zero if $a_2=qa_n=a_1$. This contradicts the
assumption that $a_2\ne a_1$. Thus, the first term must be zero, 
and we get $a_2=q^{-1/(n-1)}a_1=q^{(n-2)/(n-1)}a_n$. To prove the 
above for $a_i$, we may assume that $a_i\ne a_l$ for all $1\le l\le i-1$. Indeed, if $a_i=a_l$ then by the induction hypothesis $a_i$
has the right form.  Thus we may apply the intertwining operator $I_{w_1}\circ I_{w_2}\circ\ldots\circ I_{w_{i-1}}$. At the point $\chi=\chi_0$ we obtain from
\eqref{pr2} and from Proposition \ref{prop1}, that
\begin{equation}\label{van1}\notag
(1-q^{-(n-1)}a_i^{n-1}a_n^{-(n-1)})
\prod_{k=1}^{i-1}(1-q^{-1}a_k^{n-1}a_i^{-(n-1)})=0
\end{equation}
The first term cannot be zero, since if so we would get $a_i=qa_n=a_1$
which contradicts the above assumption. Hence, for some $1\le k\le i-1$, we have $a_i=q^{-1/(n-1)}a_k= q^{(j-1)/(n-1)}a_n$ where $n-i\le j\le n-1$, where the last equation follows from the induction step. This proves that $a_iq^{j/(n-1)}a_n$ for the mentioned above values of $j$.

In particular, we obtain that 
$a_i\ne a_n$ for all $1\le i\le n-1$. 
We claim that using the above we must have $a_i=q^{(n-i)/(n-1)}a_n$. Indeed,
when $i=n-1$, we apply the intertwining operator $I_{w_n}$ and
we get the equality
$$(1-q^{-1}a_{n-1}^{n-1}a_n^{-(n-1)})(1-q^{-(n-1)}a_1^{n-1}a_{n-1}^{-(n-1)})=0$$
If the second term is zero, then $a_{n-1}=q^{-1}a_1=a_n$ which contradicts our above assertion that $a_i\ne a_n$ for all $i$. Thus
the first term is zero, and we get $a_{n-1}=q^{1/(n-1)}a_n$. Assume
by induction that $a_k=q^{(n-k)/(n-1)}a_n$ for all $k\ge i+1$, and
we prove that $a_i=q^{(n-i)/(n-1)}a_n$. From the induction assumption,
we deduce that the intertwining operator $I_{w_{n-1}}\circ I_{w_{n-2}}\circ\ldots\circ I_{w_i}$ is well defined at $\chi=\chi_0$, and hence we obtain that
\begin{equation}\label{van2}\notag
(1-q^{-(n-1)}a_1^{n-1}a_i^{-(n-1)})
\prod_{k=i+1}^{n}(1-q^{-1}a_i^{n-1}a_k^{-(n-1)})=0
\end{equation}
The left most term cannot be zero, and plugging $a_k=q^{(n-k)/(n-1)}a_n$, we obtain that $a_i=q^{(n-k+1)/(n-1)}a_n$ for some $k\ge i+1$. 
However, from the above, we know that $a_i=q^{j/(n-1)}a_n$ for some
$n-i\le j\le n-1$. Hence, $n-i\le n-k+1\le n-1$, which implies that 
$k\le i+1$. Thus we must have $k=i+1$, and the result follows. 

We proved that $a_i=q^{(n-i)/(n-1)}a_n$ for all $1\le i\le n$. This 
means that $\chi_0=\mu\delta_B^{-1/2(n-1)}$ where $\mu=|\cdot|^{1/2}
\chi_{n,0}$, and the Theorem follows.

\subsection{ The proof in the case $G=G_2$}\label{prg2}
We first indicate the parameters of the Theta representation $\Theta^{(r)}$. This can be found in \cite{Gao}. Write $Ind_{B^{(r)}}^{G^{(r)}}\chi\delta_B^{1/2}$ as
$Ind_{B^{(r)}}^{G^{(r)}}(\chi_1,\chi_2)\delta_B^{1/2}$. By that
we mean that if we parameterize  $t=\text{diag}(t_1,t_2,t_1^{-1}t_2,1,t_1t_2^{-1},t_2^{-1},t_1^{-1})$, then $\chi(t)=\chi_1(t_1)\chi_2(t_2)$.

Assume first that
$r$ is not divisible by three. In this case $\Theta^{(r)}$ is the
unramified subrepresentation of $Ind_{B^{(r)}}^{G^{(r)}}\delta_B^{-1/2r}\delta_B^{1/2}$. Then,  if we write the representation as
$Ind_{B^{(r)}}^{G^{(r)}}\chi\delta_B^{1/2}$, we have $\chi_1(p)=q^{2/r}$ and $\chi_2(p)=q^{1/r}$. 

When $r$ is divisible by three, the representation $\Theta^{(r)}$ is the unramified subrepresentation of $Ind_{B^{(r)}}^{G^{(r)}}\mu\delta_B^{1/2}$ where $\mu$ is defined as follows. Write $t\in T$ as above. Then we have $\mu(t)=
|t_1|^{-4/r}|t_2|^{-1/r}$. From this we deduce that if
$\Theta^{(r)}$ is the unramified constituent of $Ind_{B^{(r)}}^{G^{(r)}}\chi\delta_B^{1/2}$ then $\chi_1(p)=q^{4/r}$ and $\chi_2(p)=
q^{1/r}$.

It is simple to check that if we plug these values in equations
\eqref{two} - \eqref{nine}, \eqref{two1} - \eqref{nine1}, \eqref{two2}
and \eqref{three2}, we get zero.

It is not hard to check that if we apply the intertwining operator
$I_{w_a}$ to the unramified vector $f_\chi^{(r)}$  in $Ind_{B^{(r)}}^{G^{(r)}}(\chi_1,\chi_2)\delta_B^{1/2}$, then we obtain $I_{w_a}(
f_\chi^{(r)})=c_{w_a}(\chi)f_{^{w_a}\chi}^{(r)}$ where 
$c_{w_a}(\chi)=(1-q^{-1}\chi_1^{r_1}\chi_2^{-r_1})/(1-\chi_1^{r_1}\chi_2^{-r_1})$ and $f_{^{w_a}\chi}^{(r)}$ is the unramified vector in 
$Ind_{B^{(r)}}^{G^{(r)}}(\chi_2,\chi_1)\delta_B^{1/2}$. Here $r_1=r$
if $r$ is not divisible by three, and $r_1=r/3$ if $r$ is divisible 
by three. We remind the reader that when there is no confusion  we write $\chi_i$ in short for $\chi_i(p)$.
In both cases we can apply $I_{w_a}$ only if $\chi_1\ne \chi_2$. Similarly, when we apply $I_{w_b}$, we obtain 
$c_{w_b}(\chi)=(1-q^{-1}\chi_2^{r})/(1-\chi_2^{r})$ and $f_{^{w_b}\chi}^{(r)}$ is the unramified vector in 
$Ind_{B^{(r)}}^{G^{(r)}}(\chi_1\chi_2,\chi_2^{-1})\delta_B^{1/2}$.
Hence, $I_{w_b}$ is defined only if $\chi_2\ne 1$.

To prove the Theorem, assume that the unramified subrepresentation of $Ind_{B^{(r)}}^{G^{(r)}}\chi_0\delta_B^{1/2}$ has no nonzero Whittaker function. Write $a_1=\chi_{0,1}(p)$ and $a_2=\chi_{0,2}(p)$. 

When $r\ne 2,3,5$, the Theorem follows from  Propositions \ref{prop2},
\ref{prop3} and  \ref{prop4}. We
now prove the Theorem for the case $r=5$. From Proposition \ref{prop2}
we obtained $W^{(5)}(e)=1-q^{-3}\chi_1^5\chi_2^5$. Since we assume that $W^{(5)}(e)=0$ we deduce that $a_1a_2=q^{3/5}$. Assume that $a_2\ne 1$. Then we can apply $I_{w_b}$ on $Ind_{B^{(r)}}^{G^{(r)}}\chi_0\delta_B^{1/2}$, and the vanishing of the
left hand side of \eqref{pr2} implies that $(1-q^{-1}a_2^5)
(1-q^{-3}a_1^5)=0$. The first term in this product is just the
numerator of $c_{w_b}(\chi_0)$ and the second term is identity 
\eqref{five} applied to the induced representation 
$Ind_{B^{(r)}}^{G^{(r)}}(\chi_1\chi_2,\chi_2^{-1})\delta_B^{1/2}$ at
the point $\chi=\chi_0$.
If $1-q^{-3}a_1^5=0$ then the identity $a_1a_2=q^{3/5}$ 
implies that $a_2=1$ which is a contradiction. Hence we must
have $1-q^{-1}a_2^5=0$ which implies that $a_2=q^{1/5}$ and 
hence $a_1=q^{2/5}$. These are exactly the parameters of $\Theta^{(5)}$. If $a_2=1$, then  $I_{w_b}$ is not defined on $Ind_{B^{(r)}}^{G^{(r)}}\chi_0\delta_B^{1/2}$, and
it is not hard to check that $I_{w_a}$ will not give us any new 
information. We apply $I_{w_b}\circ I_{w_a}$. This is well defined
since the assumption $a_2=1$ and the condition $a_1a_2=q^{3/5}$ implies that $a_1\ne a_2$. The intertwining operator 
$I_{w_b}\circ I_{w_a}$ maps $Ind_{B^{(r)}}^{G^{(r)}}(\chi_1,\chi_2)\delta_B^{1/2}$ into $Ind_{B^{(r)}}^{G^{(r)}}(\chi_1\chi_2,\chi_1^{-1})\delta_B^{1/2}$ and is well defined. Indeed, $I_{w_a}$
maps $Ind_{B^{(r)}}^{G^{(r)}}(\chi_1,\chi_2)\delta_B^{1/2}$
into $Ind_{B^{(r)}}^{G^{(r)}}(\chi_2,\chi_1)\delta_B^{1/2}$. Since
$\chi_1\ne 1$ we can further apply $I_{w_b}$.
Equality \eqref{pr2} at the point $\chi=\chi_0$ implies
that $(1-q^{-1}a_1^5a_2^{-5})(1-q^{-1}a_1^5)(1-q^{-3}a_2
^5)=0$. Since we assumed that $a_2=1$ then the condition $a_1a_2=q^{3/5}$ implies $a_1=q^{3/5}$, and plugging these values into the above equality we obtain a contradiction. This completes the proof of the Theorem in
the case when $r=5$.

Finally, when $r=3$, as mentioned above, we deduce from Propositions \ref{prop2} and \ref{prop4} that $\chi=(\chi_1,\chi_1,|\cdot |^{-1})$,
and when $r=2$ we obtain from Section \ref{g2} that $\chi$ is one of the three cases mentioned in the introduction.

\end{document}